\newcommand{\HRz}{H_R^{(0)}}
\DeclarePairedDelimiter{\abs}{\lvert}{\rvert}
\DeclarePairedDelimiter{\norm}{\lVert}{\rVert}
\DeclarePairedDelimiter{\set}{\lbrace}{\rbrace}
\DeclarePairedDelimiter{\br}{(}{)}
\DeclarePairedDelimiter{\sbr}{[}{]}
\newcommand{\sigess}{\sigma_e}
\newcommand{\sigdis}{\sigma_{d}}
\newcommand{\bs}{\backslash}
\newcommand{\supp}{\text{supp}}
\newcommand{\R}{\mathbb R}
\newcommand{\C}{\mathbb C}
\renewcommand{\d}{\,\text{\rm{d}}}
\newcommand{\sgn}{\text{sgn}}
\renewcommand{\geq}{\geqslant}
\renewcommand{\leq}{\leqslant}
\renewcommand{\epsilon}{\varepsilon}
\newtheorem{theorem}{Theorem}
\newtheorem{proposition}[theorem]{Proposition}
\newtheorem{lemma}[theorem]{Lemma}
\theoremstyle{remark}
\newtheorem{remark}{Remark}
\theoremstyle{definition}
\newtheorem{assumption}{Assumption}
\theoremstyle{definition}
\theoremstyle{definition}
\title[Bounds for dissipative barriers]{Bounds for Schr\"odinger operators on the half-line perturbed by dissipative barriers}
\author{Alexei Stepanenko}
\address{Cardiff University, School of Mathematics, Senghennydd Road, Cardiff, UK, CF24 4AG}
\date{\today}
\email{stepanenkoa@cardiff.ac.uk}
\thanks{The author would like to express his gratitude to his PhD supervisors Jonathan Ben-Artzi and Marco Marletta, for helpful discussion and guidance. 
The author's research is supported by the United Kingdom Engineering and Physical
Sciences Research Council, through its Doctoral Training Partnership with Cardiff University.
}
\subjclass[2010]{34L40, 34L15, 47A55}
\keywords{non-self-adjoint, one-dimensional Schr\"odinger operators, eigenvalue, dissipative}
\begin{document}

\begin{abstract}

We consider Schr\"odinger operators of the form $H_R = - \d^2/\d x^2 + q + i \gamma \chi_{[0,R]}$ for large $R>0$, where $q \in L^1(0,\infty)$ and $\gamma > 0$. 
Bounds for the maximum magnitude of an eigenvalue and for the number of eigenvalues are proved. 
These bounds complement existing general bounds applied to this system, for sufficiently large $R$. 

\end{abstract}

\maketitle


\section{Introduction}

There has recently been a surge of interest concerning bounds for the magnitude of eigenvalues and the number of eigenvalues of Schr\"{o}dinger operators with complex potentials.
In this paper, we consider Schr\"{o}dinger operators of the form
\begin{equation}\label{eq:HR-intro}
    H_R = - \frac{\d^2}{\d x^2} + q + i \gamma \chi_{[0,R]} \quad \text{on} \quad L^2(0,\infty)\qquad(R > 0),
\end{equation}
endowed with a Dirichlet boundary condition at 0, where $\gamma > 0$ and the \textit{background potential} $q \in L^1(0,\infty)$ (which may be complex-valued)  are regarded as fixed parameters.
Perturbations of the form $i \gamma \chi_{[0,R]}$ are referred to as \textit{dissipative barriers} and arise in spectral approximation, where they can be utilised as part of numerical schemes for the computation of eigenvalues \cite{marletta2012eigenvalues,stepanenko2020,db_aljawi2020,db_marletta2010ntd,db_naboko2014,db_strauss2014}.  
Our aim is to prove estimates for the magnitude and number of eigenvalues of $H_R$ for large $R$.

\subsection{Existing Bounds for the Magnitude and Number of Eigenvalues}\label{subsec:lit-rev}

Let us first discuss some relevant existing results concerning the eigenvalues of (non-self-adjoint) Schr\"odinger operators and apply them to operators of the form $H_R$. 

In \cite{abramov2001}, Abramov, Aslanyan and Davies investigated bounds for complex eigenvalues of Schr\"odinger operators, in particular obtaining a bound \cite[Theorem 4]{abramov2001} for Schr\"odinger operator on $L^2(\R)$ with a potential $V \in L^1(\R) \cap L^2(\R)$.  
Such magnitude bounds were later generalised to include more general potentials, higher dimensions and more general geometries \cite{cuenin2019improved,magnitude_daviesnath2002,magnitude_enblom2016,magnitude_Frank2011,magnitude_Frank2018,magnitude_franksimon2017,magnitude_guilarmou2019,magnitude_LaptevSafronov2009,magnitude_LeeSeo2018,magnitude_Safronov2010}. 
The work most relevant to this paper was undertook by Frank, Laptev and Seiringer \cite{FrankLaptevSeir2011}, where they show that any eigenvalue $\lambda$ of a Schr\"odinger operator $-\d^2/\d x^2 + V$ on $L^2(\R_+)$, endowed with a Dirichlet boundary condition at 0, satisfies
\begin{equation}\label{eq:lit-mag-bound}
 \sqrt{|\lambda|} \leq \norm{V}_{L^1}.
\end{equation}
Note that the right hand side of the bound presented in \cite{FrankLaptevSeir2011} depends on $\arg \lambda$ and is sharper than \eqref{eq:lit-mag-bound}.  
An application of this result to operators of the form $H_R$ gives an estimate $\sqrt{|\lambda_R|} = O(R)$ as $R \to \infty$ for any eigenvalue $\lambda_R$ of $H_R$.

Proving bounds for the number of eigenvalues of a Schr\"odinger operator is often regarded a more difficult problem. 
A sufficient condition for the potential $V$ to ensure that the number of eigenvalues of a Schr\"odinger operator on $L^2(\R_+)$ is finite is the \textit{Naimark condition} \cite{Naimark}:
\begin{equation}
 \exists a > 0: \int_0^\infty e^{at} |V(t)| \d t < \infty.
\end{equation}
There exist other such sufficient conditions and it is known that the number of eigenvalues may not be finite for certain potentials decaying only sub-exponentially \cite{bogli2017,pavlov1967,Pavlov1968}. 

\begin{table}[t]
\centering
\vspace{2mm}
\begin{tabular}{ |p{2.2cm}||p{4cm}|p{4cm}|}
\hline
 & Literature & Our Results \\
 \hline\hline
  \hfill\break  Magnitude Bound \hfill\break & \vfill $ \sqrt{|\lambda_R|} = O(R)$ \newline Frank, Laptev, Seiringer \newline (2011) \vspace{2mm}
   & \vspace{2mm} $\sqrt{|\lambda_R|} = O(R/\log R)$\newline Theorem \ref{thm:mag-bound} \\
 \hline
  Number of \newline Eigenvalues ($q$ Compactly Supported) &  \hfill\break $ N(H_R) = O(R^2)$ \newline Korotyaev (2020)
              &  \vspace{2mm}  $ N(H_R) = O(R^2/ \log R)$ \newline Theorem \ref{thm:compact-number}  \\
 \hline
  Number of \newline Eigenvalues (Naimark Condition) & \vfill $ N(H_R) = O(R^4)$ \newline Frank, Laptev, Safronov \newline (2016) \vspace{2mm}
              & \vspace{2mm} $ N(H_R) = O(R^3/(\log R)^2)$ \newline Theorem \ref{thm:exp-number} \\
 \hline
\end{tabular}
\vspace{2mm}
\caption{A summary of the large $R$ asymptotic estimates for the eigenvalues of $H_R$ implied by our results compared to estimates obtained by applying various results in the literature. }
\label{tab:asymptotics}
\end{table}

Quantitative bounds for the number of eigenvalues of a Schr\"odinger operator on $L^2(\R^d)$ were proved by Stepin in \cite{stepin2015complex,stepin2017} for dimensions $d = 1, 3$. 
Bounds for arbitrary odd dimensions were later proved by Frank, Laptev and Safronov in \cite{frank2016number}, which give better large $R$ estimates when applied to operators $H_R$ of the form \eqref{eq:HR-intro}.
\cite[Theorem 1.1]{frank2016number} states that the number of eigenvalues $N$ (counting algebraic multiplicity) of a Schr\"odinger operator $-\d^2 /\d x^2 + V$ on $L^2(\R_+)$ endowed with a Dirichlet boundary condition at 0 satisfies 
\begin{equation}
N \leq \frac{1}{\epsilon^2} \br*{\int_0^\infty e^{\epsilon t }|V(t)| \d t}^2. 
\end{equation}
for any $\epsilon > 0$.
With the assumption that the background potential $q$ satisfies the Naimark condition, applying this inequality to $H_R$ with $\epsilon = 1/R$ gives an estimate $N(H_R) = O(R^4)$ as $R \to \infty$ for the number of eigenvalues (counting algebraic multiplicities) $N(H_R)$ of $H_R$.

Additionally, Korotyaev has proved in \cite[Theorem 1.6]{korotyaev2020a} a bound specific to Schr\"odinger operators with compactly supported potentials: the number of eigenvalues $N$ of a Schr\"odinger operator $-\d^2 /\d x^2 + V$ on $L^2(\R_+)$ endowed with a Dirichlet boundary condition at 0, with $V \in L^1(\R_+)$ and $\supp V \subseteq [0,Q]$, satisfies
\begin{equation}
N \leq C_1 + C_2 Q \norm{V}_{L^1}
\end{equation}
where $C_1, C_2 > 0$ are some numerical constants.
With the assumption that the background potential $q$ is compactly supported, applying this inequality to $H_R$ gives an estimate $N(H_R) = O(R^2)$ as $R \to \infty$.
We mention also other estimates for numbers of eigenvalues in \cite{borichevCountingEigenvaluesSchr2019a,hulko2017,someyama2019}.

\subsection{Summary of Results}\label{subsec:summary}
Table \ref{tab:asymptotics} summarises our results for the large $R$ behaviour of the eigenvalues of $H_R$ and compares them to the application of the existing results to operators of the form $H_R$.

Let $\HRz$ denote the operator $H_R$ for the case $q \equiv 0$.
The semi-infinite strip
\begin{equation}\label{eq:Gam-gam-intro}
 \Gamma_\gamma := (0,\infty) + i (0,\gamma) \subset \C
\end{equation}
plays an important role throughout the paper and has the property that its closure $\overline{\Gamma}_\gamma$ is equal to the numerical range of the operator $\HRz$ for any $R > 0$.
An open ball in $\C$ of radius $r > 0$ about a point $z_0 \in \C$ is denoted by $B_r(z_0)$.
Note that in this paper we make no attempt to optimise numerical constants.

\begin{figure}[t]
  \includegraphics[width=\textwidth]{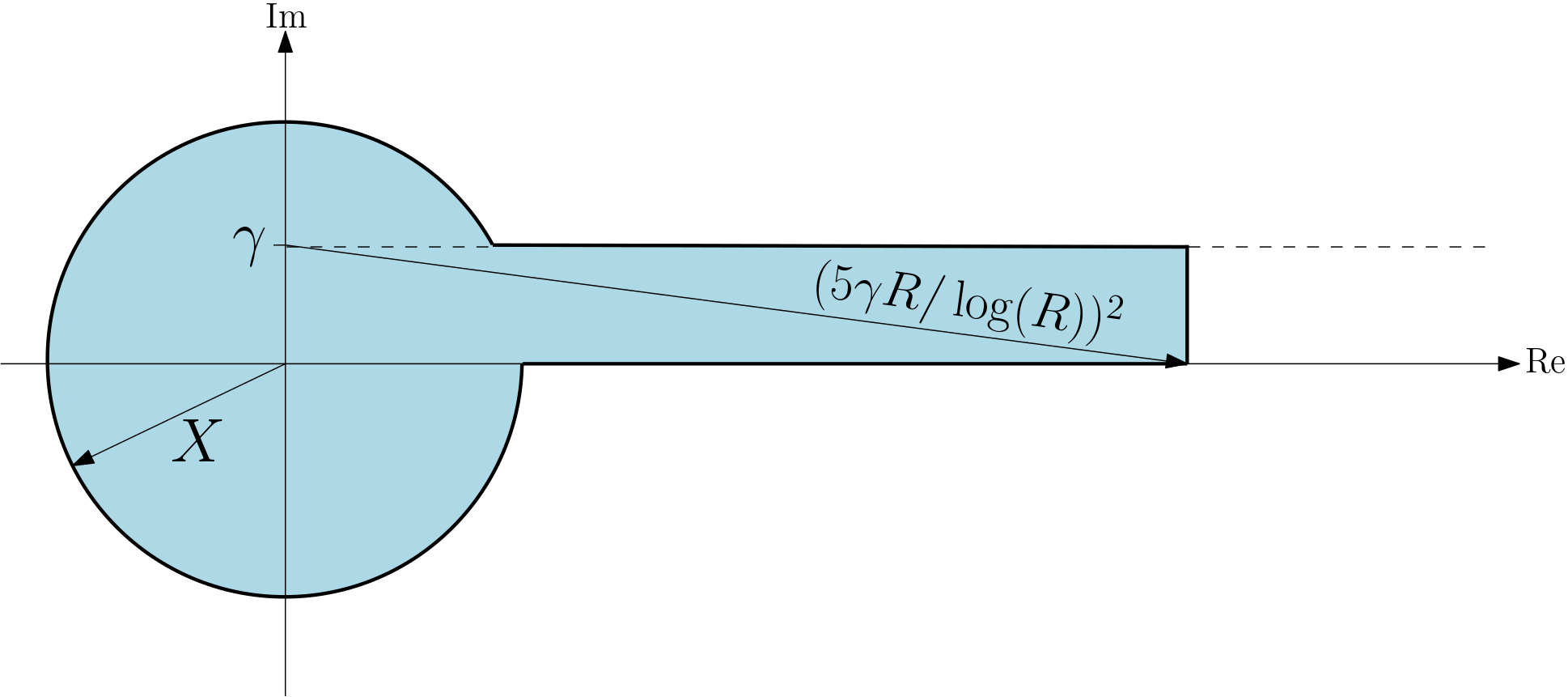}  
  \caption{Illustration the enclosure for the eigenvalues of $H_R$ provided by Theorem \ref{thm:mag-bound}.} 
  \label{fig:encl}
\end{figure}

Our first result gives a uniform in $R$ enclosure for the eigenvalues of $H_R$:
\begin{enumerate}[label=(A), labelindent=0pt]
 \item (Theorem \ref{thm:mag-bound} (a)) There exists $X = X(q,\gamma) > 0$ such that, for any $R>0$, the eigenvalues of $H_R$ lie in $B_X(0)\cup\Gamma_\gamma$.
\end{enumerate}
In particular, the imaginary and negative real components of the eigenvalues are bounded independently of $R$.

Our next result is a bound for the magnitude of eigenvalues of $H_R$ for sufficiently large $R$.
The bound gives the estimate $\sqrt{|\lambda_R|} = O(R/\log R)$ as $R \to \infty$ for any eigenvalue $\lambda_R$ of $H_R$ providing a logarithmic improvement to the application of the result \cite{FrankLaptevSeir2011} of Frank, Laptev and Seiringer to this system.
\begin{enumerate}[label=(B), labelindent=0pt]
\item (Theorem \ref{thm:mag-bound} (b)) There exists $R_0 = R_0(q, \gamma) > 0$ such that for every $R \geq R_0$, any eigenvalue $\lambda$ of $H_R$ in $\Gamma_\gamma$ satisfies
\begin{equation}\label{eq:intro-mag-bound-log}
    \sqrt{|\lambda - i \gamma|} \leq \frac{ 5 \gamma R}{\log R}.
\end{equation}
\end{enumerate}
The first inequality in (\ref{eq:lower-bounds}) shows that the estimate $\sqrt{|\lambda_R|} = O(R/\log R)$ is in fact sharp.
(B) is obtained by considering an analytic function whose zeros are the eigenvalues  of $H_R$ and applying large-$|\lambda|$ Levinson asymptotics. 
The enclosure that results from combining (A) and (B) is illustrated in Figure \ref{fig:encl}.

The fact that large eigenvalues of $H_R$ for large $R$ must be contained in the numerical range of $\HRz$ and the right hand side of inequality \eqref{eq:intro-mag-bound-log} is independent of $q$ indicates that the effect of the background potential $q$ on the large eigenvalues is dominated by effect of the dissipative barrier $i \gamma \chi_{[0,R]}$ for large $R$. 

Our first estimate for the number of eigenvalues $N(H_R)$ for $H_R$ is for the case that the background potential $q$ is compactly supported.
It gives the estimate $N(H_R) = O(R^2/ \log R)$ as $R \to \infty$, which offers a logarithmic improvement to the application of the result \cite[Theorem 1.6]{korotyaev2020a} of Korotyaev to this system.
\begin{enumerate}[label=(C), labelindent=0pt]
\item (Theorem \ref{thm:compact-number})
If $q$ is compactly supported then there exists $R_0 = R_0(q,\gamma) > 0$ such that for every $R \geq R_0$,
\begin{equation*}
    N(H_R) \leq \frac{11}{\log 2} \frac{ \gamma R^2}{\log R}.
\end{equation*}
\end{enumerate}
The second inequality in (\ref{eq:lower-bounds}) shows that the estimate  $N(H_R) = O(R^2/ \log R)$  is sharp.
The proof consists in an application of Jensen's formula.  

The case in which the background potential $q$ merely satisfies the Naimark condition requires more sophisticated techniques compared to the compactly supported case.
Our result gives the estimate $N(H_R) = O(R^3/(\log R)^2)$ as $R \to \infty$, providing a more significant improvement to the application of the result \cite[Theorem 1.1]{frank2016number} of Frank, Laptev and Safronov to this system, which gives $N(H_R) = O(R^4)$. The reasons for the more significant improvement are discussed below.
\begin{enumerate}[label=(D), labelindent=0pt]
\item (Theorem \ref{thm:exp-number})
If there exists $a > 0$ such that 
\begin{equation*}
    \int_0^\infty e^{4 at} |q(t)| \d t < \infty.
\end{equation*} 
then there exists $R_0 = R_0(q,\gamma) > 0$ such that for every $R \geq R_0$,
\begin{equation}\label{eq:intro-num-est}
    N(H_R) \leq C \frac{\sqrt{X} + a}{a^2}\frac{\gamma^2 R^3}{(\log R)^2}
\end{equation}
where $X = X(q,\gamma) > 0$ is the constant appearing in (A) and $C = 88788$.
\end{enumerate}

The proof of (D) involves first obtaining a bound which counts the number of zeros in a strip for an arbitrary analytic function in the upper half plane
(Proposition \ref{prop:Jensen-Upper}). 
This bound can be applied to the estimation of $N(H_R)$ thanks to the uniform in $R$ enclosure (A), which implies that the square-roots of the eigenvalues of $H_R$ are contained in a strip, uniformly in $R$. 
Without the uniform enclosure, we would have to use the magnitude bound (B) in place of the uniform enclosure with which the best we could obtain is inequality \eqref{eq:intro-num-est} with $\sqrt{X}$ replaced by $O(R/\log R)$, giving the large $R$ estimate $N(H_R) = O(R^4/(\log R)^3)$.
This indicates that the more significant improvement in (D) is due to the combination of a bound for the quantity $\Im \sqrt{\lambda}$ of the eigenvalues $\lambda$ with the
bound Proposition \ref{prop:Jensen-Upper} for analytic functions.

Operators of the form $H_R^{(0)}$, corresponding to the special case $q = 0$, have been studied by B\"ogli and \v{S}tampach
in \cite{bogliLiebThirringInequalities2020}, by Golinskii in \cite{golinskii2021} 
and by Cuenin in \cite{cueninSchrOdingerOperators2021}.
A consequence of \cite[Theorem 4]{cueninSchrOdingerOperators2021} is that there exists constants $C_1, C_2 > 0$ such that for all large enough $R > 0$,
\begin{equation}
  \label{eq:lower-bounds}
  \sup_{\lambda \in \sigma(H_R^{(0)})} \sqrt{|\lambda|}  \geq C_1 \frac{R}{\log(R)} \qquad \text{and} \qquad N(H_R^{(0)}) \geq C_2 \frac{R^2}{\log(R)}.  
\end{equation}
Note that although this result was formulated for the Schr\"odinger operator on $\R$, it applies to Schr\"odinger operators on $\R_+$
endowed with a Dirichlet or Neumann boundary condition since the author constructs both odd and even eigenfunctions
of $H_R^{(0)}$ in \cite[Section 7.1]{cueninSchrOdingerOperators2021}.
As already mentioned, the inequalities (\ref{eq:lower-bounds}) show that Theorem \ref{thm:mag-bound} (b) and Theorem \ref{thm:compact-number}
provide optimal large $R$ estimates. 

The reader is referred to \cite[Section 5]{stepanenko2020} for numerical illustrations of the eigenvalues of operators of the form $H_R$ for large $R$. 

\subsection{Notations and Conventions}

Throughout the paper, $C > 0$ denotes a constant, whose dependencies are generally indicated, that may change from line to line.
$\psi'(x,\lambda)$ will denote $\frac{\d}{\d x} \psi(x,\lambda)$ throughout. 
The branch cut of $\sqrt{\cdot}$ is made along $\sigess(H_R) = [0,\infty)$, so that $ \Im \sqrt{z} \geq 0 $ for all $ z \in \C$.
$N(H_R)$ shall denote the number of eigenvalues of $H_R$, counting algebraic multiplicities (as above).
Finally, note that $f_R$ will always denote an analytic function but will be redefined in each section.

\section{Magnitude Bound}\label{sec:mag-bound}

Since $q \in L^1(0,\infty)$, we can employ Levinson's asymptotic theorem which states that the solution space
of the Schr\"odinger equation $ -u'' + q u = \lambda u$ on $[0,\infty)$ is spanned by solutions $\psi_+$ and $\psi_-$,
which admit the decomposition \cite[Appendix II, Theorems 1 and 3]{naimarklinear} \cite[Theorem 1.3.1]{eastham1989asymptotic}: 
\begin{align}\label{eq:Epm-expr}
\begin{split}
    \psi_\pm(x,\lambda) & = e^{\pm i \sqrt{\lambda} x} (1 + E_\pm(x,\lambda)) \\
    \psi'_\pm(x,\lambda) & = \pm i \sqrt{\lambda} e^{\pm i \sqrt{\lambda} x} (1 + E^d_\pm(x,\lambda))
\end{split} \qquad (x \in [0,\infty),\lambda \in \C \bs \set{0}).
\end{align}
Here, $E_\pm$ and $E_\pm^d$ are some functions such that,
\begin{equation}\label{eq:Epm-limit}
|E_\pm(x,\lambda)|+|E_\pm^d(x,\lambda)| \to 0 \quad \text{as}\quad  x \to \infty 
\end{equation}
for all $\lambda \in \C \bs \set{0}$, and
\begin{equation}\label{eq:Epm-error}
    |E_\pm(x,\lambda)|+|E^d_\pm(x,\lambda)| \leq \frac{C(q)}{\sqrt{|\lambda|}}
\end{equation}
for all $x \in [0,\infty)$ and $\lambda \in \C$ with $|\lambda| \geq 1$.

While the error $E_+(x,\lambda)$ tends to 0 as $x \to \infty$ uniformly for $ \lambda \in \C \bs B_\delta(0)$, $\delta > 0$, the error $E_-$ does not have this property. 
For this reason, we will need to utilise large-$|\lambda|$ asymptotics of $\psi_\pm$ in this section.
\begin{lemma}\label{lem:fR-Lev}

$\lambda \in \C \bs [0,\infty)$ with $\lambda \neq i \gamma$ is an eigenvalue of $H_R$ if an only if $f_R(\lambda) = 0$, where
\begin{align*}
    f_R(\lambda) & := \psi_-(0,\lambda - i \gamma) \br*{ \sqrt{\lambda} - \sqrt{\lambda - i \gamma} + \mathcal{E}_1(R, \lambda)} e^{i\sqrt{\lambda - i \gamma} R} \\
    & - \psi_+(0,\lambda - i \gamma) \br*{ \sqrt{\lambda} + \sqrt{\lambda - i \gamma} + \mathcal{E}_2(R, \lambda)} e^{ - i\sqrt{\lambda - i \gamma} R}.
\end{align*}
Here, $\mathcal E_1,\mathcal E_2$ are defined, for any $R > 0$ and $\lambda \in \C \bs \set{0,i \gamma}$, by 
\begin{multline}\label{eq:big-error-E1}
    \mathcal E_1(R,\lambda)  =  \sqrt{\lambda} \br*{E_+(R,\lambda - i \gamma) + E^d_+(R,\lambda) + E_+(R,\lambda - i \gamma)E^d_+(R,\lambda) } \\
     - \sqrt{\lambda - i \gamma} \br*{ E_+^d(R,\lambda - i \gamma) + E_+(R,\lambda) + E_+^d(R,\lambda - i \gamma)E_+(R,\lambda)},
\end{multline}
\begin{multline}\label{eq:big-error-E2}
    \mathcal E_2(R,\lambda)  =  \sqrt{\lambda} \br*{ E^d_+(R,\lambda) + E_-(R,\lambda - i \gamma) + E^d_+(R,\lambda)E_-(R,\lambda - i \gamma)} \\
     + \sqrt{\lambda - i \gamma} \br*{E_+(R,\lambda) + E^d_-(R,\lambda - i\gamma) + E_+(R,\lambda)E^d_-(R,\lambda - i\gamma)}
\end{multline}
and, for some $\mathcal C_1 = \mathcal C_1(q,\gamma) > 0 $, satisfy
\begin{equation}\label{eq:big-E-bounds}
    |\mathcal E_1(R,\lambda)|+|\mathcal E_2(R,\lambda)| \leq \mathcal C_1 
\end{equation}
for all $R > 0$ and all $\lambda \in \C$ with $|\lambda| \geq 1 + \gamma$.
 Furthermore, $f_R$, $\mathcal E_1(R,\cdot)$ and $\mathcal E_2(R,\cdot)$ are analytic on $\C \bs \br*{ [0,\infty)\cup(i \gamma + [0,\infty))}$. 
\end{lemma}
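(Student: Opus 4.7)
The plan is to derive $f_R$ from the determinant of the matching conditions at $x=0$ and $x=R$, then read off $\mathcal E_1,\mathcal E_2$ by direct substitution of the Levinson asymptotics \eqref{eq:Epm-expr}.

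First I would observe that, for $\lambda\in\C\setminus([0,\infty)\cup\set{i\gamma})$, any eigenfunction $u$ of $H_R$ satisfies $-u''+qu=(\lambda-i\gamma)u$ on $[0,R]$ and $-u''+qu=\lambda u$ on $(R,\infty)$, and decays at infinity. Since the branch convention gives $\Im\sqrt\lambda>0$ off the positive real axis, $\psi_+(\cdot,\lambda)$ is the only $L^2$ solution on $(R,\infty)$, and on $[0,R]$ the eigenfunction is a linear combination of $\psi_\pm(\cdot,\lambda-i\gamma)$. Imposing $u(0)=0$ together with continuity of $u$ and $u'$ at $R$ yields a $3\times3$ linear system in the three coefficients; $\lambda$ is an eigenvalue iff this system has a nontrivial solution, i.e.\ iff its determinant vanishes.

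Next I would expand the determinant along the column carrying the exterior coefficient and group terms by $\psi_\pm(0,\lambda-i\gamma)$, producing two Wronskian-like combinations at $x=R$,
\[
W_1=\psi'_+(R,\lambda)\psi_+(R,\lambda-i\gamma)-\psi_+(R,\lambda)\psi'_+(R,\lambda-i\gamma),
\]
\[
W_2=\psi_+(R,\lambda)\psi'_-(R,\lambda-i\gamma)-\psi'_+(R,\lambda)\psi_-(R,\lambda-i\gamma),
\]
so that $\det=\psi_-(0,\lambda-i\gamma)W_1+\psi_+(0,\lambda-i\gamma)W_2$. Substituting \eqref{eq:Epm-expr} factors each $W_j$ as $\pm i$ times an exponential in $R$ times a bracket $\bigl[\sqrt{\lambda}\mp\sqrt{\lambda-i\gamma}+\mathcal E_j\bigr]$; expanding $(1+E_\bullet)(1+E^d_\bullet)$ and collecting gives exactly the formulas \eqref{eq:big-error-E1} and \eqref{eq:big-error-E2}. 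Finally, factoring out the common nonvanishing factor $ie^{i\sqrt\lambda R}$ yields $\det=ie^{i\sqrt\lambda R}f_R(\lambda)$, so the vanishing determinant is equivalent to $f_R(\lambda)=0$.

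For the bound \eqref{eq:big-E-bounds}, I would use that $|\lambda|\geq 1+\gamma$ forces $|\lambda-i\gamma|\geq 1$ by the reverse triangle inequality; then \eqref{eq:Epm-error} applied at both $\lambda$ and $\lambda-i\gamma$ controls each $E_\bullet$ by $C(q)$, and each summand of $\mathcal E_j$ is of the form $\sqrt{|\lambda|}\cdot O(1/\sqrt{|\lambda|})$ or $\sqrt{|\lambda|}\cdot O(1/|\lambda|)$, hence bounded by a constant $\mathcal C_1(q,\gamma)$. Analyticity of $f_R,\mathcal E_1(R,\cdot),\mathcal E_2(R,\cdot)$ on $\C\setminus([0,\infty)\cup(i\gamma+[0,\infty)))$ follows since $\psi_\pm(\cdot,\mu)$ and $e^{\pm i\sqrt\mu R}$ are analytic off the branch cut $\mu\in[0,\infty)$, applied at $\mu=\lambda$ and $\mu=\lambda-i\gamma$. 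The only delicate bookkeeping is matching the signs and arguments in the products $(1+E)(1+E^d)$ to the precise form of $\mathcal E_1,\mathcal E_2$ claimed in the statement; this is the main opportunity for sign errors but presents no conceptual obstacle.
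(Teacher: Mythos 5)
Your proposal is correct and follows essentially the same route as the paper: the paper directly builds the interior solution $\psi_1(\cdot,\lambda)=\psi_-(0,\lambda-i\gamma)\psi_+(\cdot,\lambda-i\gamma)-\psi_+(0,\lambda-i\gamma)\psi_-(\cdot,\lambda-i\gamma)$ (which already satisfies the Dirichlet condition) and imposes the $2\times 2$ Wronskian matching at $x=R$, while you set up the equivalent $3\times 3$ system in the three coefficients and expand its determinant, arriving at exactly the same combination $\psi_-(0,\lambda-i\gamma)W_1+\psi_+(0,\lambda-i\gamma)W_2$ and the same factor $ie^{i\sqrt\lambda R}$. The substitution of the Levinson asymptotics, the resulting $\mathcal E_1,\mathcal E_2$, the boundedness argument via $|\lambda-i\gamma|\geq 1$ and \eqref{eq:Epm-error}, and the analyticity argument all coincide with the paper's proof.
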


\begin{proof}

Let $\lambda \in \C \bs [0,\infty)$ with $ \lambda \neq i \gamma$.
$\lambda$ is an eigenvalue of $H_R$ if and only if there a solution to the boundary value problem 
\begin{equation}\label{eq:eig-bvp}
    - \psi '' + (q + i \gamma \chi_{[0,R]})\psi = \lambda \psi \text{ on }[0,\infty),\, \psi(0) = 0, \psi \in L^2(0,\infty).
\end{equation}
Any solution to \eqref{eq:eig-bvp} on $[0,R]$ must be of the form $C_1 \psi_1(\cdot,\lambda)$, where
\begin{equation}\label{eq:psi-1}
    \psi_1(x,\lambda) := \psi_-(0,\lambda - i \gamma) \psi_+(x,\lambda - i \gamma) - \psi_+(0,\lambda - i \gamma) \psi_-(x,\lambda - i \gamma)
\end{equation}
and $C_1 \in \C$ is independent of $x$. 
Any solution to the boundary value problem \eqref{eq:eig-bvp} on $[R,\infty)$ must be of the form $C_2 \psi_+(x,\lambda)$, where $C_2 \in \C$ is independent of $x$. 
Hence $\lambda$ is an eigenvalue if and only if there exists $C_1,C_2 \in \C \bs \set{0}$ independent of $x$ such that the function
\begin{equation*}
    x \mapsto \begin{cases}
    C_1 \psi_1(x,\lambda) & \text{if }x \in [0,R) \\
    C_2 \psi_+(x,\lambda) & \text{if }x \in [R, \infty)
    \end{cases}
\end{equation*}
is continuously differentiable which holds if and only if
\begin{equation}\label{eq:trans-psi}
    i f_R(\lambda) e^{i\sqrt{\lambda} R} \equiv \psi_1(R, \lambda) \psi_+'(R, \lambda) - \psi_1'(R, \lambda) \psi_+(R, \lambda ) = 0.
\end{equation}
The required expression for $f_R$ holds by a direct computation, using expressions \eqref{eq:Epm-expr} for $\psi_\pm$.

For $\lambda \in \C$ with $|\lambda| \geq 1 + \gamma $ we have $|\lambda| \geq 1$ and $|\lambda - i \gamma| \geq 1$.
Therefore, estimates \eqref{eq:Epm-error} apply to all the terms in \eqref{eq:big-error-E1} and \eqref{eq:big-error-E2} involving $E_\pm$ or $E_\pm^d$.
The $O(1/\sqrt{|\lambda|})$ decay of the terms involving $E_\pm$ or $E_\pm^d$ as $|\lambda| \to \infty$ cancel the growth of the square roots hence estimate \eqref{eq:big-E-bounds} holds.
Finally, $f_R$, $\mathcal E_1(R,\cdot)$ and $\mathcal E_2(R,\cdot)$ are analytic on $\C \bs \br*{ [0,\infty)\cup(i \gamma + [0,\infty))}$ because $\sqrt{\cdot}$, $E_\pm(R,\cdot)$ and $E^d_\pm(R,\cdot)$ are analytic on $\C \bs [0,\infty)$.   
\end{proof}

In the special case $q \equiv 0$, $f_R$ is denoted by $f_R^{(0)}$ and we have that:
\begin{equation*}
    \lambda \in \C \bs [0,\infty)\text{ is an eigenvalue of } \HRz \text{ if and only if }f_R^{(0)}(\lambda) = 0. 
\end{equation*}
The terms $E_\pm$ and $E_\pm^d$ in Levinson's asymptotic theorem are simply zero for this case, so 
\begin{equation}\label{eq:fR0-expr}
    f_R^{(0)}(\lambda) = \br*{\sqrt{\lambda} - \sqrt{\lambda - i \gamma}} e^{ i \sqrt{\lambda - i \gamma} R} - \br*{\sqrt{\lambda} + \sqrt{\lambda - i \gamma}} e^{- i \sqrt{\lambda - i \gamma} R}.
\end{equation}

\begin{lemma}\label{lem:fR0}
There exists a constant $\mathcal C_2  = \mathcal C_2(q, \gamma) > 0$ such that 
\begin{equation*}
    |f_R(\lambda) - f_R^{(0)}(\lambda)| \leq \mathcal C_2 e^{\Im \sqrt{\lambda - i \gamma} R}
\end{equation*}
for all $R > 0$ and all $\lambda \in \C$ with $|\lambda| \geq 1 + \gamma$.
\end{lemma}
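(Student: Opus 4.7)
The plan is to rearrange $f_R(\lambda) - f_R^{(0)}(\lambda)$ as a two-term combination
\begin{equation*}
f_R(\lambda) - f_R^{(0)}(\lambda) = A_R(\lambda) e^{i\sqrt{\lambda - i\gamma}R} - B_R(\lambda) e^{-i\sqrt{\lambda - i\gamma}R},
\end{equation*}
and then show the coefficients $A_R(\lambda), B_R(\lambda)$ are bounded uniformly in $R>0$ and in $\lambda$ with $|\lambda|\geq 1+\gamma$. Once that is done, the claim is immediate: by the branch-cut convention $\Im\sqrt{\lambda - i\gamma} \geq 0$, so $|e^{-i\sqrt{\lambda - i\gamma}R}| = e^{\Im\sqrt{\lambda - i\gamma}R}$ and $|e^{i\sqrt{\lambda - i\gamma}R}| = e^{-\Im\sqrt{\lambda - i\gamma}R} \leq e^{\Im\sqrt{\lambda - i\gamma}R}$, and we can take $\mathcal{C}_2 = \sup_R \sup_\lambda(|A_R(\lambda)| + |B_R(\lambda)|)$.

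To identify $A_R, B_R$, subtract \eqref{eq:fR0-expr} from the formula in Lemma \ref{lem:fR-Lev}, group by exponentials, and substitute the Levinson expansion \eqref{eq:Epm-expr} at $x=0$, namely $\psi_\pm(0,\lambda - i\gamma) = 1 + E_\pm(0,\lambda - i\gamma)$. The $1$-contributions cancel against $f_R^{(0)}$, yielding
\begin{align*}
A_R(\lambda) & = E_-(0,\lambda - i\gamma)\bigl(\sqrt{\lambda} - \sqrt{\lambda - i\gamma}\bigr) + \psi_-(0,\lambda - i\gamma)\,\mathcal{E}_1(R,\lambda), \\
B_R(\lambda) & = E_+(0,\lambda - i\gamma)\bigl(\sqrt{\lambda} + \sqrt{\lambda - i\gamma}\bigr) + \psi_+(0,\lambda - i\gamma)\,\mathcal{E}_2(R,\lambda).
\end{align*}

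The main work is now in estimating $A_R$ and $B_R$. The $\mathcal{E}_i$-terms are handled directly by \eqref{eq:big-E-bounds} together with the bound $|\psi_\pm(0,\lambda - i\gamma)| \leq 1 + C(q)$ coming from \eqref{eq:Epm-error}. The delicate pieces are $E_\pm(0,\lambda - i\gamma)\bigl(\sqrt{\lambda} \pm \sqrt{\lambda - i\gamma}\bigr)$, which one might fear grow like $\sqrt{|\lambda|}$. The point is that $|\lambda| \geq 1 + \gamma$ forces $|\lambda - i\gamma| \geq 1$, so the Levinson bound \eqref{eq:Epm-error} gives $|E_\pm(0,\lambda - i\gamma)| \leq C(q)/\sqrt{|\lambda - i\gamma|}$; together with the triangle inequality $|\sqrt{\lambda} \pm \sqrt{\lambda - i\gamma}| \leq \sqrt{|\lambda|} + \sqrt{|\lambda - i\gamma|}$ and the elementary bound $|\lambda|/|\lambda - i\gamma| \leq 1 + \gamma$ on our domain, the $\sqrt{|\lambda|}$-growth is exactly absorbed by the $1/\sqrt{|\lambda - i\gamma|}$-decay. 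This compensation is the only non-routine step; once it is in hand, combining the two bounds with the exponential estimates above closes the argument with a constant $\mathcal{C}_2$ depending only on $q$ and $\gamma$.
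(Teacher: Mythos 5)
Your proof is correct and follows essentially the same route as the paper's: you arrive at the same decomposition (the paper presents it multiplied through by $e^{i\sqrt{\lambda - i\gamma}R}$, which is cosmetic) and use the same compensation mechanism, namely that the Levinson bound $|E_\pm(0,\lambda-i\gamma)| \leq C(q)/\sqrt{|\lambda-i\gamma|}$ absorbs the $\sqrt{|\lambda|}$ growth of $\sqrt{\lambda}\pm\sqrt{\lambda-i\gamma}$. The one thing you spell out that the paper leaves implicit is the elementary inequality $|\lambda|/|\lambda - i\gamma| \leq 1+\gamma$ on the domain $|\lambda|\geq 1+\gamma$, which makes the compensation explicit.
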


\begin{proof}
By a direct computation, using Lemma \ref{lem:fR-Lev} and the fact that 
\begin{equation*}
    \psi_\pm(0,\lambda - i \gamma) = 1 + E_\pm(0,\lambda - i \gamma),
\end{equation*}
we have 
\begin{align}\label{eqpr:fR0-1}
\begin{split}
    (f_R(\lambda) - f_R^{(0)}(\lambda))e^{i\sqrt{\lambda - i \gamma} R} & = E_-(0,\lambda - i \gamma) \sbr*{ \sqrt{\lambda} - \sqrt{\lambda - i \gamma}}e^{2 i \sqrt{\lambda - i \gamma} R} \\
    & - E_+(0,\lambda - i \gamma) \sbr*{ \sqrt{\lambda} + \sqrt{\lambda - i \gamma}} \\
    & + (1 + E_-(0,\lambda - i \gamma))\mathcal E_1(R,\lambda) e^{2i\sqrt{\lambda - i \gamma}R}  \\
    & - (1 + E_+(0,\lambda - i \gamma)) \mathcal E_2(R,\lambda).
\end{split}
\end{align}
Each term on the right hand side of \eqref{eqpr:fR0-1} is bounded uniformly for all $R > 0$ and all $\lambda \in \C$ with $|\lambda| \geq 1 + \gamma$; this follows using the boundedness for $\mathcal E_1$ and $\mathcal E_2$ proved in Lemma \ref{lem:fR-Lev} as well as the large-$|\lambda|$ asymptotics of $E_\pm(0,\lambda - i \gamma)$ in \eqref{eq:Epm-error}.
In particular, inequality \eqref{eq:Epm-error} implies that $E_\pm(0,\lambda - i \gamma) = O(1/\sqrt{|\lambda|})$ as $|\lambda| \to \infty$, balancing the growth of the factors $\sqrt{\lambda} \pm \sqrt{\lambda - i \gamma}$ in the first two terms of \eqref{eqpr:fR0-1}.
\end{proof}

Recall that $\Gamma_\gamma$ is an open strip defined by equation \eqref{eq:Gam-gam-intro}.
We shall need the following elementary inequalities:
\begin{lemma}
  \label{lem:h-pm}
\begin{enumerate}[label=\rm{(\alph*)},wide, labelindent=0pt]
    \item If $\lambda \in \Gamma_\gamma \cup [0,\infty)$ then 
    \begin{equation*}
        |\sqrt{\lambda} + \sqrt{\lambda - i \gamma}| \leq \frac{\gamma}{\sqrt{|\lambda - i \gamma|}}\text{ and }|\sqrt{\lambda} - \sqrt{\lambda - i \gamma}| \geq \sqrt{|\lambda - i \gamma|}.
    \end{equation*}
    \item If $\lambda \in \C \bs \br*{ \Gamma_\gamma \cup [0,\infty) }$ then 
    \begin{equation*}
        |\sqrt{\lambda} + \sqrt{\lambda - i \gamma}| \geq \sqrt{|\lambda|}\text{ and }|\sqrt{\lambda} - \sqrt{\lambda - i \gamma}| \leq \frac{\gamma}{\sqrt{|\lambda|}}. 
    \end{equation*}
     \item 
    If $\lambda \in \Gamma_\gamma$ then 
    \begin{equation*}
        \Im \sqrt{\lambda - i \gamma} \leq \frac{1}{\sqrt{2}} \frac{\gamma}{\sqrt{|\lambda - i \gamma|}}.
    \end{equation*}
\end{enumerate}
\end{lemma}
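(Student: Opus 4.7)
The plan is to exploit the factorization $(\sqrt{\lambda} - \sqrt{\lambda - i\gamma})(\sqrt{\lambda} + \sqrt{\lambda - i\gamma}) = i\gamma$, from which $|\sqrt{\lambda} - \sqrt{\lambda - i\gamma}|\cdot|\sqrt{\lambda} + \sqrt{\lambda - i\gamma}| = \gamma$ follows immediately. This renders the two inequalities in (a) equivalent, and similarly the two in (b), so in each case only one need be proved. Writing $u = \sqrt{\lambda}$ and $v = \sqrt{\lambda - i\gamma}$, the identity
\[
|u \pm v|^2 = |u|^2 + |v|^2 \pm 2\Re(u\bar v) = |u|^2 + |v|^2 \pm 2|u||v|\cos(\arg u - \arg v)
\]
reduces both (a) and (b) to controlling the sign of $\cos(\arg u - \arg v)$.

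For (a), the assumption $\lambda \in \Gamma_\gamma \cup [0,\infty)$, combined with the branch-cut convention $\arg z \in [0, 2\pi)$ on $\C \setminus [0,\infty)$, forces $\arg u \in [0, \pi/4)$ and $\arg v \in [3\pi/4, \pi)$, since $\lambda - i\gamma$ lies in the closed fourth quadrant with strictly negative imaginary part. Hence $\arg u - \arg v \in (-\pi, -\pi/2)$, so $\cos(\arg u - \arg v) \leq 0$, and the parallelogram identity gives $|u - v|^2 \geq |u|^2 + |v|^2 \geq |v|^2 = |\lambda - i\gamma|$; the degenerate case $\lambda = 0$ is trivial, and the companion inequality follows from the factorization.

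For (b), the analogous strategy works, but now a case split according to the position of $\lambda$ (which quadrant, and whether $\Im\lambda \geq \gamma$ or $\Im\lambda < 0$) is needed to verify that $\arg u - \arg v \in [-\pi/2, \pi/2]$ whenever $\lambda \in \C \setminus(\Gamma_\gamma \cup [0,\infty))$, so $\cos(\arg u - \arg v) \geq 0$ and $|u+v|^2 \geq |u|^2 + |v|^2 \geq |u|^2 = |\lambda|$. This multi-case argument analysis is the principal bookkeeping obstacle, though each case is straightforward once the branch-cut convention is fixed.

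For (c), $\lambda \in \Gamma_\gamma$ places $\lambda - i\gamma$ strictly in the open fourth quadrant, so $v$ lies strictly in the open second quadrant with $\arg v \in (3\pi/4, \pi)$. In this range $|\Im v| < |\Re v|$, giving $|v| \leq \sqrt{2}\,|\Re v|$. The identity $2\Re v \cdot \Im v = \Im v^2 = \Im\lambda - \gamma$ together with $\Im\lambda \in (0, \gamma)$ yields $|\Re v|\cdot \Im v = (\gamma - \Im\lambda)/2 \leq \gamma/2$, and combining these two estimates,
\[
\Im v \cdot |v| \leq \sqrt{2}\,\Im v \cdot |\Re v| \leq \gamma/\sqrt{2},
\]
which rearranges to the claimed bound.
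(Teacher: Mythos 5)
Your proof is correct and takes essentially the same route as the paper: both reduce the bounds in (a) and (b) to the sign of the cross term $\Re\bigl(\sqrt{\lambda}\,\overline{\sqrt{\lambda - i\gamma}}\bigr)$ in the expansion of $|\sqrt{\lambda} \pm \sqrt{\lambda - i\gamma}|^2$ (you via $\cos(\arg u - \arg v)$, the paper via sign and magnitude comparisons of $\Re\sqrt{\cdot}$ against $\Im\sqrt{\cdot}$), and then pass to the companion inequality through the identity $(\sqrt{\lambda} - \sqrt{\lambda - i\gamma})(\sqrt{\lambda} + \sqrt{\lambda - i\gamma}) = i\gamma$. The case split you defer in (b) does go through and mirrors the paper's own decomposition (same-sign real parts when $\Im\lambda \geq \gamma$ or $\Im\lambda < 0$, and the domination $|\Re\sqrt{\cdot}| \leq \Im\sqrt{\cdot}$ when $\Re\lambda \leq 0$, $0 \leq \Im\lambda \leq \gamma$), while your (c) is a minor rearrangement of the paper's one-line identity $2(\Im\sqrt{z})^2 = |z| - \Re z = (\Im z)^2/(|z| + \Re z)$.
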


\begin{proof}

\begin{enumerate}[label=\rm{(\alph*)},wide, labelindent=0pt]
\item 
If $\lambda \in \Gamma_\gamma \cup [0,\infty)$ then
\begin{equation*}
    \sgn \Re \sqrt{\lambda - i \gamma} = - \sgn \Re \sqrt{\lambda},\, |\Re \sqrt{\lambda - i \gamma}| \geq \Im \sqrt{\lambda - i \gamma}\text{ and }|\Re \sqrt{\lambda}| \geq \Im \sqrt{\lambda}
\end{equation*}
so 
\begin{align}\label{eqpr:h-pm-1}
\begin{split}
    |\sqrt{\lambda} - \sqrt{\lambda - i \gamma}|^2 = & (\Re \sqrt{\lambda})^2 + (\Re \sqrt{\lambda - i \gamma})^2 + (\Im \sqrt{\lambda})^2 + (\Im \sqrt{\lambda - i \gamma})^2 \\
     & - 2 \Re \sqrt{\lambda} \Re \sqrt{\lambda - i \gamma} - 2 \Im \sqrt{\lambda} \Im \sqrt{\lambda - i \gamma} \\
     \geq & |\lambda - i \gamma|.
\end{split}
\end{align}
The inequality for $\sqrt{\lambda} + \sqrt{\lambda - i \gamma}$ follows from the identity
\begin{equation}\label{eqpr:h-pm-2}
    \sqrt{\lambda} + \sqrt{\lambda - i \gamma} = \frac{i \gamma}{ \sqrt{\lambda} - \sqrt{\lambda - i \gamma}}. 
\end{equation}
\item
If $\lambda \in i \gamma + \C_+\cup[0,\infty)$ or $\lambda \in \C_- $ then, similarly to \eqref{eqpr:h-pm-1},
\begin{equation*}
    \sgn \Re \sqrt{\lambda} = \sgn \Re \sqrt{\lambda - i \gamma} \quad \Rightarrow \quad  |\sqrt{\lambda} + \sqrt{\lambda - i \gamma}|^2 \geq |\lambda|
.\end{equation*}
If $\lambda \in (-\infty,0]+ i[0,\gamma]$ then $|\Re \sqrt{\lambda} |\leq \Im \sqrt{\lambda}$ and $|\Re \sqrt{\lambda - i \gamma} |\leq \Im \sqrt{\lambda - i \gamma}$ so  
\begin{equation*}
    |\sqrt{\lambda} + \sqrt{\lambda - i \gamma}|^2 \geq |\lambda - i \gamma| + |\lambda| \geq |\lambda|
\end{equation*}
hence the inequality for $\sqrt{\lambda} + \sqrt{\lambda - i \gamma}$ holds. The inequality for $\sqrt{\lambda} - \sqrt{\lambda - i \gamma}$ follows from \eqref{eqpr:h-pm-2}. 
\item 
Let $\lambda \in \Gamma_\gamma$ and let $z = \lambda - i \gamma$. Then $|\Im z| \leq \gamma$ so 
\begin{equation*}
    2(\Im \sqrt{z})^2 = |z| - \Re z = \frac{(\Im z)^2}{|z| + \Re z} \leq \frac{\gamma^2}{|z|}. 
\end{equation*}
\end{enumerate}
\end{proof}

Using the function $f_R$ for the eigenvalues of $H_R$, combined with the large-$|\lambda|$ asymptotics of $\psi_\pm$, we can estimate the location of the eigenvalues of $H_R$:

\begin{theorem}\label{thm:mag-bound}
\begin{enumerate}[label=\rm{(\alph*)},wide, labelindent=0pt]
\item There exists $X = X(q,\gamma) > 0$ such that, for any $R>0$, the eigenvalues of $H_R$ lie in $B_X(0)\cup\Gamma_\gamma$.
\item There exists $R_0 = R_0(q, \gamma) > 0$ such that for every $R \geq R_0$, any eigenvalue $\lambda$ of $H_R$ in $\Gamma_\gamma$ satisfies
\begin{equation}\label{eq:mag-bound-q-indpt}
    \sqrt{|\lambda - i \gamma|} \leq \frac{ 5 \gamma R}{\log R}.
\end{equation}
\end{enumerate}
\end{theorem}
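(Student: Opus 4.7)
The plan is to deduce both parts from the transcendental equation $f_R(\lambda) = 0$ of Lemma \ref{lem:fR-Lev}. Rearranging and taking absolute values (writing $\tau := \Im\sqrt{\lambda - i\gamma} \geq 0$) gives
\begin{equation*}
|\psi_+(0,\lambda-i\gamma)|\,|\sqrt{\lambda} + \sqrt{\lambda-i\gamma} + \mathcal{E}_2(R,\lambda)|\,e^{2\tau R} = |\psi_-(0,\lambda-i\gamma)|\,|\sqrt{\lambda} - \sqrt{\lambda-i\gamma} + \mathcal{E}_1(R,\lambda)|.
\end{equation*}
Estimate \eqref{eq:Epm-error} shows that $\psi_\pm(0,\lambda - i\gamma) \to 1$ as $|\lambda| \to \infty$, while $|\mathcal{E}_j(R,\lambda)| \leq \mathcal{C}_1$ uniformly in $R$ for $|\lambda| \geq 1 + \gamma$. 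The two parts then split according to whether Lemma \ref{lem:h-pm}(a) or (b) applies to the factors $\sqrt{\lambda} \pm \sqrt{\lambda - i\gamma}$.

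For part (a), first exclude eigenvalues on $(0,\infty)$: for $\lambda > 0$, both asymptotic solutions $\psi_\pm(\cdot,\lambda)$ have unit modulus at infinity, so no nontrivial $L^2(R,\infty)$ combination exists, and unique continuation kills the solution on $[0,R]$. Eigenvalues with $\Im\lambda < 0$ are ruled out by dissipativity of $H_R$ (since $q$ is real and $\gamma > 0$). One may thus assume $\lambda \notin \Gamma_\gamma \cup [0,\infty)$ and apply Lemma \ref{lem:h-pm}(b):
\begin{equation*}
|\sqrt{\lambda}+\sqrt{\lambda-i\gamma}+\mathcal{E}_2| \geq \sqrt{|\lambda|} - \mathcal{C}_1, \qquad |\sqrt{\lambda}-\sqrt{\lambda-i\gamma}+\mathcal{E}_1| \leq \frac{\gamma}{\sqrt{|\lambda|}} + \mathcal{C}_1.
\end{equation*}
Using $e^{2\tau R} \geq 1$ in the displayed identity then forces $|\lambda| \leq X$ for some constant $X = X(q,\gamma)$; crucially, this bound is independent of $R$, because the $R$-dependent factor $e^{2\tau R}$ only strengthens the inequality.

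For part (b), restrict to $\lambda \in \Gamma_\gamma$ with $|\lambda|$ large and apply Lemma \ref{lem:h-pm}(a) instead, which reverses both inequalities. The identity now yields the lower bound
\begin{equation*}
e^{2\tau R} \geq \frac{|\psi_-(0,\lambda-i\gamma)|}{|\psi_+(0,\lambda-i\gamma)|} \cdot \frac{\sqrt{|\lambda-i\gamma|} - \mathcal{C}_1}{\gamma/\sqrt{|\lambda-i\gamma|} + \mathcal{C}_1}.
\end{equation*}
Taking logarithms and invoking Lemma \ref{lem:h-pm}(c) to replace $\tau$ by its upper bound $\gamma/(\sqrt{2}\sqrt{|\lambda-i\gamma|})$, then setting $s := \sqrt{|\lambda-i\gamma|}$, reduces the inequality (up to $o(1)$ corrections as $|\lambda|\to\infty$) to $\sqrt{2}\,\gamma R \geq s\log(s/C)$ for some $C = C(q,\gamma) > 0$. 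A short contradiction argument closes the proof: if $s > 5\gamma R/\log R$ and $R$ is large, then $\log(s/C) = (1+o(1))\log R$, so $s\log(s/C) \geq 5\gamma R (1+o(1))$, contradicting the previous inequality since $5 > \sqrt{2}$.

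The main technical difficulty is tracking the constants and error terms sharply enough to extract the explicit factor of $5$ in \eqref{eq:mag-bound-q-indpt}. In particular, the ``large $|\lambda|$'' condition needed for the asymptotics on $\psi_\pm(0,\cdot)$ and $\mathcal{E}_j$ must be converted into an explicit ``large $R$'' threshold, and this is done via part (a): any eigenvalue outside $B_X(0)$ lies in $\Gamma_\gamma$ regardless of $R$, so for $R$ large the eigenvalues in $\Gamma_\gamma$ one must control are automatically in the asymptotic regime. The remainder of the proof is bookkeeping.
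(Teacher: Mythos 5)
Your proposal is correct and follows essentially the same route as the paper: rearrange the secular equation $f_R(\lambda)=0$ from Lemma \ref{lem:fR-Lev}, apply the elementary inequalities of Lemma \ref{lem:h-pm}, and exploit the large-$|\lambda|$ Levinson asymptotics of $\psi_\pm(0,\cdot)$ and $\mathcal{E}_{1,2}$. The only real differences are presentational: the paper routes the error bookkeeping through the explicit comparison function $f_R^{(0)}$ (Lemma \ref{lem:fR0}) and then inverts the transcendental inequality $s\log s\leq 4\gamma R$ via the Lambert-$W$ function, whereas you take logarithms of the ratio directly and close with a contradiction argument using $\log s\sim\log R$; these are logically equivalent. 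Two minor remarks: the dissipativity observation in your part (a) is true but redundant, since Lemma \ref{lem:h-pm}(b) already covers $\C_-$; and your appeal to part (a) at the end is not what converts ``large $|\lambda|$'' into ``large $R$'' -- that conversion is automatic because the threshold $5\gamma R/\log R\to\infty$ as $R\to\infty$.
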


\begin{proof}

\begin{enumerate}[label=\rm{(\alph*)},wide, labelindent=0pt]
\item 
Let $R>0$. 
$H_R$ has no eigenvalues in $[0,\infty)$ (indeed, this follows from the Levinson asymptotic formulas (\ref{eq:Epm-expr})) so it suffices to show that any zero of $f_R$ in $\C \bs \br*{ \Gamma_\gamma \cup [0,\infty)}$ must lie in an open ball in the complex plane, whose radius is independent of $R$.
Let $\lambda \in \C \bs \br*{ \Gamma_\gamma \cup [0,\infty)}$ be such that $|\lambda| \geq X$, where $X = X(q,\gamma) > 0$ is a large enough constant to be further specified.
Let $X>0$ be large enough so that $|\lambda| \geq 1 + \gamma$.
By the expression for $f_R$ in Lemma \ref{lem:fR-Lev}, 
\begin{align}\label{eqpr:mag-bound-a-1}
    \abs*{f_R(\lambda)e^{i\sqrt{\lambda - i \gamma}R}} 
     \geq & \left| \abs{\psi_+(0,\lambda - i \gamma)\br{\sqrt{\lambda} + \sqrt{\lambda - i \gamma} + \mathcal E_2(R,\lambda)} } \right. \nonumber \\
    & \left. - \abs{\psi_-(0,\lambda - i \gamma) \br{\sqrt{\lambda} - \sqrt{\lambda - i \gamma} + \mathcal E_1(R,\lambda)}}e^{-2\Im\sqrt{\lambda - i \gamma}R} \right|. 
\end{align}
By the boundedness of $\mathcal E_1$ and $E_-$ (Lemma \ref{lem:fR-Lev} and estimates \eqref{eq:Epm-error}), as well an inequality in Lemma \ref{lem:h-pm} (b), there exists $C_1 = C_1(q,\gamma) > 0$ such that 
\begin{equation}\label{eqpr:mag-bound-a-2}
    \abs*{\psi_-(0,\lambda -i \gamma)\br*{\sqrt{\lambda} - \sqrt{\lambda - i \gamma} + \mathcal E_1(R,\lambda)}}e^{ - 2\Im \sqrt{\lambda -i \gamma} R} \leq C_1.
\end{equation}
Let $\delta > 0$.
Recall that $|\mathcal E_2(R,\lambda)| \leq \mathcal{C}_1$, where $\mathcal C_1 > 0$ is the constant appearing in Lemma \ref{lem:fR-Lev}. 
Let $X>0$ be large enough such that $|\psi_+(0,\lambda - i \gamma)| \geq \frac{1}{2}$
and
\begin{equation*}
    \sqrt{|\lambda|} \geq 2(C_1 + \delta) + \mathcal C_1.
\end{equation*}
Then, using Lemma \ref{lem:h-pm} (b), 
\begin{equation}\label{eqpr:mag-bound-a-3}
    \abs*{\psi_+(0,\lambda - i \gamma) \br*{\sqrt{\lambda} + \sqrt{\lambda -i \gamma} + \mathcal E_2(R, \lambda)}}
     \geq \frac{1}{2} \abs*{ \sqrt{|\lambda|} - \mathcal C_1} \geq C_1 + \delta.
\end{equation}
Combining \eqref{eqpr:mag-bound-a-1}, \eqref{eqpr:mag-bound-a-2} and \eqref{eqpr:mag-bound-a-3}, we have
\begin{equation*}
    |f_R(\lambda)|\geq \delta > 0.
\end{equation*}
Consequently, $\lambda$ is not an eigenvalue of $H_R$ proving that there are no eigenvalues of $H_R$  in $\C \bs \Gamma_\gamma$ with magnitude greater than $X$.

\item

Let $R \geq R_0$, where $R_0 = R_0(q,\gamma) > 0$ is a large enough constant to be further specified. Let $\lambda \in \Gamma_\gamma$ be such that 
\begin{equation}\label{eqpr:mag-bound-b-1}
    \sqrt{|\lambda - i \gamma|} \log |\lambda - i \gamma| \geq 8 \gamma R.
\end{equation}
We aim to prove that $\lambda$ is not an eigenvalue of $H_R$.

Using the expression \eqref{eq:fR0-expr} for $f_R^{(0)}$, 
\begin{equation}
    \frac{|f_R^{(0)}(\lambda)|}{|\lambda - i \gamma|^{1/4}}e^{- \Im \sqrt{\lambda - i \gamma}R}  \geq \abs*{ \frac{|\sqrt{\lambda} - \sqrt{\lambda - i \gamma}|}{|\lambda - i \gamma|^{1/4}}e^{-2 \Im \sqrt{\lambda - i \gamma}R} -  \frac{|\sqrt{\lambda} + \sqrt{\lambda - i \gamma}|}{|\lambda - i \gamma|^{1/4}} } \nonumber
\end{equation}
Using the inequality \eqref{eqpr:mag-bound-b-1} and Lemma \ref{lem:h-pm} (c), $\lambda$ satisfies
\begin{equation}\label{eqpr:mag-bound-b-2}
    e^{-2\Im\sqrt{\lambda - i \gamma}R} \geq e^{- \sqrt{2} \gamma R/\sqrt{|\lambda - i \gamma|}} \geq e^{-\frac{\sqrt{2}}{8}\log |\lambda - i \gamma|} = \frac{1}{|\lambda - i \gamma|^{\sqrt{2}/8}}.
\end{equation}
Ensure $R_0 > 0$ is large enough so that $|\lambda - i \gamma|^{1/4} \geq 2 |\lambda - i \gamma|^{\sqrt{2}/8}$. Then, using Lemma \ref{lem:h-pm} (a), 
\begin{equation}\label{eqpr:mag-bound-b-3}
    \frac{|\sqrt{\lambda} - \sqrt{\lambda - i \gamma}|}{|\lambda - i \gamma|^{1/4}} \geq |\lambda - i \gamma|^{1/4} \geq 2 |\lambda - i \gamma|^{\sqrt{2}/8}. 
\end{equation}
Ensure also that $R_0 > 0$ is large enough so that $|\lambda - i \gamma| \geq \gamma^{4/3}$.
Combining \eqref{eqpr:mag-bound-b-3} with \eqref{eqpr:mag-bound-b-2} and using Lemma \ref{lem:h-pm} (a) again,
\begin{equation*}
    \frac{|\sqrt{\lambda} - \sqrt{\lambda - i \gamma}|}{|\lambda - i \gamma|^{1/4}}e^{-2 \Im \sqrt{\lambda - i \gamma}R} \geq 2 \geq 1 + \frac{\abs*{\sqrt{\lambda} + \sqrt{\lambda - i \gamma}}}{|\lambda - i \gamma|^{1/4}}.
\end{equation*}
and hence
\begin{equation}\label{eqpr:mag-bound-2}
    |f_R^{(0)}(\lambda)| \geq |\lambda - i \gamma|^{1/4} e^{\Im \sqrt{\lambda - i \gamma}R}.
\end{equation}
In particular, $f_R^{(0)}(\lambda) \neq 0$. 

Recall that $\mathcal C_2 = \mathcal C_2(q,\gamma) > 0$ denotes the constant appearing in Lemma \ref{lem:fR0}. 
Ensure that $R_0 > 0$ is large enough so that  $|\lambda| \geq 1 + \gamma$ and $|\lambda - i \gamma|^{1/4} \geq 2 \mathcal{C}_2$. 
By \eqref{eqpr:mag-bound-2} and Lemma \ref{lem:fR0},
\begin{equation*}
    |f_R(\lambda) - f_R^{(0)}(\lambda)| \leq \mathcal C_2 e^{\Im \sqrt{\lambda - i \gamma}R} \leq \frac{1}{2} |\lambda - i \gamma|^{1/4} e^{\Im \sqrt{\lambda - i \gamma}R} \leq \frac{1}{2}  |f_R^{(0)}(\lambda)|
\end{equation*}
therefore $f_R(\lambda)\neq 0$ and, consequently, $\lambda$ is not an eigenvalue of $H_R$. 
This proves that any eigenvalue of $H_R$ must satisfy 
\begin{equation}\label{eqpr:mag-bound-3}
    \sqrt{|\lambda - i \gamma|} \log \sqrt{|\lambda - i \gamma|} \leq 4 \gamma R.
\end{equation}

Let $W$ denote the Lambert-$W$-function (also known as the product log function). 
$W$ satisfies 
\begin{equation*}
 W(x) = \log \br*{\frac{x}{W(x)}} \quad \text{and} \quad  y \log y = x \iff y = \frac{x}{W(x)}   \qquad (x > 0, y > 0).
\end{equation*}
Hence \eqref{eqpr:mag-bound-3} can be written as
\begin{equation*}
    \sqrt{|\lambda - i \gamma|} \leq \frac{4 \gamma R}{W(4 \gamma R)} =\frac{4 \gamma R}{\log(4\gamma R) -  \log(W(4 \gamma R))}
\end{equation*}
from which \eqref{eq:mag-bound-q-indpt} follows. 
\end{enumerate}
\end{proof}

\begin{remark}
The constant $X = X(q,\gamma) > 0$ in Theorem \ref{thm:mag-bound} (a) satisfies 
\begin{equation*}
    X = O(\norm{q}_{L^1}^3) \quad \text{as} \quad \norm{q}_{L^1} \to \infty.
\end{equation*}
This can be seen by noting that $E_\pm(R,\lambda),E_\pm^d(R,\lambda) = O(\norm{q}_{L^1})$ (see \cite[Chapter 1.4]{eastham1989asymptotic}), $\mathcal C_1 = O(\norm{q}^2_{L^1})$ and $C_1 = O(\norm{q}^3_{L^1})$.
\end{remark}

\section{Number of Eigenvalues}\label{sec:number}

In this section, we estimate the number of eigenvalues for $H_R$, for which we necessarily need to add additional assumptions on the background potential $q$. 

\subsection{Preliminaries}

Let $\psi_\pm$ denote the solutions \eqref{eq:Epm-expr} for the Schr\"odinger equation and 
\begin{equation*}
    \varphi(x,z) := \psi_+(x,z^2) \qquad (x \in [0,\infty),z \in \C_+).
\end{equation*}
$\varphi$ is commonly referred to as the \textit{Jost solution}. 
For each $R > 0$, define function $f_R: \C_+ \to \C$ by 
\begin{equation*}
    i f_R(z)e^{i z R} =  \theta(R,z)\varphi'(R,z) - \theta'(R,z) \varphi(R,z) \qquad (z \in \C_+).
\end{equation*}
where, for any $z \in \C$, $\theta(\cdot,z)$ is defined as the solution to the initial value problem 
\begin{equation}\label{eq:IVP-dirichlet}
    - \theta'' + q \theta  = (z^2 - i \gamma) \theta,\,\theta(0) = 0, \theta'(0) = 1.
\end{equation}
By the same arguments as in Lemma \ref{lem:fR-Lev}, we have the following. 
\begin{lemma}
    $f_R$ is analytic on $\C_+$ and any $z \in \C_+$ satisfies 
    \begin{equation}
     f_R(z) = 0 \quad \iff \quad z^2 \text{ is an eigenvalue of }H_R.
    \end{equation}
\end{lemma}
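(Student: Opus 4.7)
The plan is to mirror the proof of Lemma \ref{lem:fR-Lev}, now working with the square-root variable $z \in \C_+$ instead of $\lambda$. Note that for $z \in \C_+$ we have $z^2 \in \C \bs [0,\infty)$, so $z^2$ lies outside $\sigess(H_R)$ and is an eigenvalue iff the boundary value problem
\begin{equation*}
    -\psi'' + (q + i\gamma \chi_{[0,R]})\psi = z^2 \psi \text{ on } [0,\infty), \quad \psi(0) = 0, \quad \psi \in L^2(0,\infty)
\end{equation*}
admits a nontrivial solution, and I would analyse this piece by piece at the interface $x = R$.

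On $[0,R]$ the equation reduces to $-\psi'' + q\psi = (z^2 - i\gamma)\psi$, and the Dirichlet condition at $0$ forces $\psi|_{[0,R]} = C_1\,\theta(\cdot,z)$ for some $C_1 \in \C$. On $[R,\infty)$ the equation is $-\psi'' + q\psi = z^2 \psi$, whose solution space is spanned by $\psi_\pm(\cdot, z^2)$. Because $z \in \C_+$, the branch convention yields $\sqrt{z^2} = z$, so the asymptotics \eqref{eq:Epm-expr} show that $\varphi(\cdot,z) = \psi_+(\cdot,z^2)$ decays exponentially while $\psi_-(\cdot,z^2)$ grows; hence the $L^2$ requirement pins down $\psi|_{[R,\infty)} = C_2\,\varphi(\cdot,z)$ for some $C_2 \in \C$. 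Continuity of $\psi$ and $\psi'$ at $R$ yields a $2 \times 2$ homogeneous linear system in $(C_1, C_2)$ whose determinant is exactly the Wronskian $\theta(R,z)\varphi'(R,z) - \theta'(R,z)\varphi(R,z) = f_R(z) e^{izR}$. A nontrivial $(C_1,C_2)$ exists iff this determinant vanishes, and since $e^{izR} \neq 0$, this is equivalent to $f_R(z) = 0$.

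For analyticity, $\theta(R,\cdot)$ and $\theta'(R,\cdot)$ are entire by the standard theorem on analytic dependence of solutions to a linear ODE on a complex parameter (here $z^2 - i\gamma$). The Jost-type solutions $\psi_+(R,\lambda)$ and $\psi_+'(R,\lambda)$ are analytic in $\lambda$ on $\C \bs [0,\infty)$, as invoked at the end of Lemma \ref{lem:fR-Lev}; for $z \in \C_+$ we have $z^2 \notin [0,\infty)$, so $\varphi(R,z)$ and $\varphi'(R,z)$ are analytic on $\C_+$, and dividing by the nonvanishing entire factor $e^{izR}$ preserves analyticity. The argument is essentially bookkeeping that parallels Lemma \ref{lem:fR-Lev}; the only mild subtlety is verifying that the $L^2$ condition at infinity forces the Jost solution rather than a general combination of $\psi_\pm$, which is automatic from the exponential asymptotics \eqref{eq:Epm-expr} combined with $\Im z > 0$.
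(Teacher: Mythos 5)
Your proof is correct and takes the same route as the paper, which simply invokes ``the same arguments as in Lemma~\ref{lem:fR-Lev}'' without spelling them out. You have faithfully unpacked that argument in the $z$-variable, with $\theta$ playing the role of $\psi_1$ and the Jost solution $\varphi$ that of $\psi_+$, and correctly observed that $z\mapsto z^2$ carries $\C_+$ into $\C\bs[0,\infty)$ so the analyticity of $\psi_+(R,\cdot)$ transfers to $\varphi(R,\cdot)$.
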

$\varphi$ can be decomposed in a similar way to $\psi_\pm$, 
\begin{align}\label{eq:phi-decomp}
\begin{split}
    \varphi(x,z) & = e^{ i z x} (1 + E(x,z)) \\
    \varphi'(x,z) & =  i z e^{i z x} (1 + E^d(x,z))
\end{split}\qquad (x \in [0,\infty),z \in \C_+)
\end{align}
for some functions $E$ and $E^d$ whose properties will be later specified, for the different assumptions on the background potential $q$ that we consider.
We shall need the following facts concerning $f_R$ and $\theta$.
Note that in Lemma \ref{lem:cont-fR-decomp}, $\mathcal E_1$ and $\mathcal E_2$ are defined in a   different way than in Lemma \ref{lem:fR-Lev}.
\begin{lemma}\label{lem:cont-fR-decomp}
Suppose that, for each $R > 0$, $\varphi(R,\cdot)$ and $\varphi'(R,\cdot)$ admits an analytic continuation from $\C_+$ into some open $U \subset \C$.  
Then, $f_R$ admits analytic continuation into $U$. Furthermore, for each $R > 0$ and $z \in U \bs \set{\pm \sqrt{i \gamma}}$,
\begin{multline}
    f_R(z)u(z)  = \psi_-(0, z^2 - i \gamma) \br*{ z - \sqrt{z^2 - i \gamma} + \mathcal E_1(R,z) }e^{i \sqrt{z^2 - i \gamma} R} \\
     - \psi_+(0, z^2 - i \gamma) \br*{ z + \sqrt{z^2 - i \gamma} + \mathcal E_2(R,z)}e^{- i \sqrt{z^2 - i \gamma} R}
\end{multline}
where
\begin{equation}\label{eq:u-defn}
 u(z) := \psi_-(0,z^2 - i \gamma)\psi'_+(0,z^2 - i \gamma) - \psi_+(0,z^2 - i \gamma)\psi'_-(0,z^2 - i \gamma),
\end{equation}
\begin{multline}\label{eq:big-error-cont-E1}
    \mathcal E_1(R,z)  :=  z \br*{E_+(R,z^2 - i \gamma) + E^d(R,z) + E_+(R,z^2 - i \gamma)E^d(R,z) } \\
     - \sqrt{z^2 - i \gamma} \br*{ E_+^d(R,z^2 - i \gamma) + E(R,z) + E_+^d(R,z^2 - i \gamma)E(R,z)}
\end{multline}
and
\begin{multline}\label{eq:big-error-cont-E2}
    \mathcal E_2(R,z)  :=  z \br*{ E^d(R,z) + E_-(R,z^2 - i \gamma) + E^d(R,z)E_-(R,z^2 - i \gamma)} \\
     + \sqrt{z^2 - i \gamma} \br*{E(R,z) + E^d_-(R,z^2 - i\gamma) + E(R,z)E^d_-(R,z^2 - i\gamma)}.
\end{multline}
\end{lemma}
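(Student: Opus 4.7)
The plan is to adapt the argument of Lemma \ref{lem:fR-Lev} to the new definition of $f_R$ and to the Jost-solution formalism. For the analytic continuation, I first observe that $\theta(R,\cdot)$ and $\theta'(R,\cdot)$ are entire functions of $z$: since $\theta$ solves an initial value problem whose coefficients depend polynomially on $z^2$, standard ODE theory gives analyticity of the solution in the spectral parameter at each fixed $x$. Combined with the hypothesis that $\varphi(R,\cdot)$ and $\varphi'(R,\cdot)$ continue analytically to $U$, the right-hand side of $f_R(z)e^{izR} = \theta(R,z)\varphi'(R,z) - \theta'(R,z)\varphi(R,z)$ extends analytically to $U$, and hence so does $f_R$, since $e^{izR}$ is entire and nonvanishing.

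For the explicit formula, the key observation is that $\theta(\cdot, z)$ and the functions $\psi_\pm(\cdot, z^2 - i\gamma)$ solve the same second-order linear ODE, so $\theta$ is a linear combination of $\psi_\pm(\cdot, z^2 - i\gamma)$. Imposing the initial conditions $\theta(0,z) = 0$ and $\theta'(0,z) = 1$ and inverting the resulting $2\times 2$ linear system, whose determinant is exactly $-u(z)$ by the definition \eqref{eq:u-defn}, produces
\[
 u(z)\,\theta(x,z) = \psi_-(0, z^2 - i\gamma)\,\psi_+(x, z^2 - i\gamma) - \psi_+(0, z^2 - i\gamma)\,\psi_-(x, z^2 - i\gamma).
\]
Multiplying the defining identity of $f_R$ by $u(z)$ and substituting this expression reduces the right-hand side to a combination of the four products $\psi_\pm(R, z^2 - i\gamma)\,\varphi(R,z)$ and $\psi'_\pm(R, z^2 - i\gamma)\,\varphi'(R,z)$ (and the two cross-terms), which is precisely the object that was handled in the proof of Lemma \ref{lem:fR-Lev}, with $\sqrt{\lambda}$ now playing the role of $z$ and $\psi_+(x,\lambda)$ replaced by $\varphi(x,z)$.

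The final step is to expand each of those four products using \eqref{eq:Epm-expr} and \eqref{eq:phi-decomp}, then group terms according to the two exponential factors $e^{\pm i \sqrt{z^2 - i\gamma} R}$ that survive after cancelling the factor $e^{izR}$ on the left. The principal parts, obtained by setting every $E$ to zero, produce the skeleton $z \pm \sqrt{z^2 - i\gamma}$, and the remaining contributions coalesce exactly into the quantities $\mathcal E_1(R,z)$ and $\mathcal E_2(R,z)$ defined in \eqref{eq:big-error-cont-E1}--\eqref{eq:big-error-cont-E2}. I expect the only real obstacle to be routine book-keeping: tracking the asymmetry between the factor $z$ (coming from differentiating $e^{izR}$) and the factor $\sqrt{z^2 - i\gamma}$ (coming from differentiating $e^{\pm i \sqrt{z^2 - i\gamma} R}$), and keeping the cross-terms $E \cdot E_\pm^d$ and $E^d \cdot E_\pm$ correctly paired. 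No new analytic ideas beyond the linear-algebra step relating $\theta$ to $\psi_\pm$ are required.
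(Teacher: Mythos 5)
Your proposal follows the same route as the paper: establish entireness of $\theta(R,\cdot)$ (and $\theta'(R,\cdot)$), express $\theta$ as the linear combination of $\psi_\pm(\cdot,z^2-i\gamma)$ with Wronskian $u(z)$ in the denominator, multiply the defining identity for $f_R$ by $u(z)$, and expand via Levinson asymptotics exactly as in Lemma~\ref{lem:fR-Lev}. One small slip in your description: after substituting, the surviving products are $\psi_\pm(R,\cdot)\,\varphi'(R,\cdot)$ and $\psi'_\pm(R,\cdot)\,\varphi(R,\cdot)$ (the ``Wronskian-like'' pairings), not $\psi_\pm\varphi$ and $\psi'_\pm\varphi'$, but this does not affect the substance of the argument.
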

\begin{proof}
Analytic continuation holds by the fact that $\theta(R,\cdot)$ is entire \cite[Lemma 5.7]{teschlOrdinaryDifferentialEquations2012} for each $R > 0$.
If $z \neq \pm \sqrt{i \gamma}$ then the functions $\psi_\pm(\cdot,z^2 -i \gamma)$ span the solution space of the Schr\"odinger equation $ - \psi'' + q \psi = (z^2 - i \gamma) \psi$ so 
\begin{align*}
    \theta(R,z) &  = \frac{\psi_-(0,z^2 - i \gamma) \psi_+(R,z^2 - i \gamma) - \psi_+(0,z^2 - i \gamma) \psi_-(R,z^2 - i \gamma)}{\psi_-(0,z^2 - i \gamma)\psi'_+(0,z^2 - i \gamma) - \psi_+(0,z^2 - i \gamma)\psi'_-(0,z^2 - i \gamma)} \\
    & = \frac{\psi_1(R,z^2)}{u(z)}
\end{align*}
where $\psi_1$ denotes the function defined by \eqref{eq:psi-1} in Lemma \ref{lem:fR-Lev}.
The lemma follows by a direct computation, similar to one in Lemma \ref{lem:fR-Lev}.
\end{proof}

\begin{lemma}\label{lem:gronwall-est}
   For any $x \in [0,\infty)$ and $z \in \C \bs \set{\pm i \gamma}$, the solution $\theta$ to the initial value problem (\ref{eq:IVP-dirichlet}) satisfies the inequality
    \begin{equation*}
        |\theta(x,z)|+|\theta'(x,z)| \leq (1 + x) e^{|\Im \sqrt{z^2 - i \gamma} |x} \exp \br*{ \int_0^x (1 + t) |q(t)| \d t }.
    \end{equation*}
\end{lemma}
\begin{proof}
Let $\mu = \mu(z) := \sqrt{z^2 - i \gamma}$.
$\theta$ and $\theta'$ satisfy the integral equations
\begin{equation*}
    \theta(x,z) = \frac{\sin(\mu x)}{\mu} + \int_0^x \frac{\sin (\mu (x - t))}{\mu} q(t) \theta(t, z) \d t
\end{equation*}
and 
\begin{equation*}
    \theta'(x,z) = \cos(\mu x) + \int_0^x \cos(\mu (x - t)) q(t) \theta(t,z) \d t
\end{equation*}
hence satisfy the integral inequality
\begin{equation*}
    |\theta(x,z)| + |\theta'(x,z)| \leq (1 + x) e^{|\Im \mu| x} \sbr*{ 1 + \int_0^x e^{- |\Im \mu| t} |q(t)| \br*{|\theta(t,z)| + |\theta'(t,z)| } \d t }
\end{equation*}
where we used the fact that $|\sin(\mu x)||\mu|^{-1} \leq x e^{|\Im \mu|x}$ and $|\cos(\mu x)| \leq e^{|\Im \mu|x }$.
The result follows from an application of Gr\"onwall's Lemma.
\end{proof}

\subsection{Compactly Supported Potentials}

\begin{assumption}\label{ass:compact}
$q$ is compactly supported, that is, there exists $Q > 0$ such that
\begin{equation*}
    \supp\, q \subset [0,Q].
\end{equation*}

\end{assumption}

If Assumption \ref{ass:compact} holds and then the Jost solution $\varphi$ satisfies 
\begin{equation}\label{eq:jost-entire}
    \varphi(R,z) = e^{i z R}\qquad (R > Q, z \in \C)
\end{equation}
hence, for each $x \in [0,\infty)$, $\varphi(x,\cdot)$ can be analytically continued to $\C$.
Consequently, for $R > Q$, $f_R$ can be analytically continued to $\C$ and can be written as 
\begin{equation}\label{eq:fR-entire}
 f_R(z) =  z \theta(R,z) + i \theta'(R,z) \qquad (z \in \C).
\end{equation}

\begin{theorem}\label{thm:compact-number}
Suppose that Assumption \ref{ass:compact} holds. Then there exists $R_0 = R_0(q,\gamma) > 0$ such that for every $R \geq R_0$,
\begin{equation}\label{eq:number-compact-ineq}
    N(H_R) \leq \frac{11}{\log 2} \frac{ \gamma R^2}{\log R}.
\end{equation}
\end{theorem}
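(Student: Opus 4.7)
The plan is to count the zeros of the entire function $f_R$ from \eqref{eq:fR-entire} via Jensen's formula. By Theorem \ref{thm:mag-bound}, for $R \geq R_0$ every eigenvalue $\lambda$ of $H_R$ satisfies $|\lambda| \leq (5\gamma R/\log R)^2 + \gamma$, so every zero $z \in \C_+$ of $f_R$ lies in the disk $B_r(0)$ with $r := 5\gamma R/\log R + \sqrt{\gamma}$. Since the order of a zero of $f_R$ at $z_0 \in \C_+$ equals the algebraic multiplicity of the eigenvalue $z_0^2$, one has $N(H_R) \leq n(r)$, where $n(r)$ denotes the number of zeros of $f_R$ in $B_r(0)$. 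Taking outer radius $\rho = 2r$ and assuming $f_R(0) \neq 0$ (otherwise one shifts the centre to a nearby non-zero, which affects only lower-order terms), Jensen's formula yields
\[
N(H_R) \leq n(r) \leq \frac{1}{\log 2}\bigl[\log M(\rho) - \log|f_R(0)|\bigr], \qquad M(\rho) := \max_{|z|=\rho}|f_R(z)|.
\]

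For the upper bound on $M(\rho)$, apply Lemma \ref{lem:gronwall-est}: compact support of $q$ makes $\int_0^R (1+t)|q(t)|\d t \leq C_q$ uniform in $R$, and for $|z|=\rho$ one has $|\Im\sqrt{z^2-i\gamma}| \leq \sqrt{\rho^2+\gamma} \leq \rho + 1$ for large $\rho$. Combined with $|f_R(z)| \leq (|z|+1)(|\theta(R,z)|+|\theta'(R,z)|)$, this yields
\[
|f_R(z)| \leq (1+\rho)(1+R)e^{(\rho+1)R}e^{C_q},
\]
and substituting $\rho = 10\gamma R/\log R + 2\sqrt{\gamma}$ gives $\log M(\rho) \leq 10\gamma R^2/\log R + O(R)$.

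For the lower bound, $f_R(0) = -\theta'(R,0)$, where $\theta(\cdot,0)$ solves $\psi'' = (q+i\gamma)\psi$ with $\psi(0)=0$, $\psi'(0)=1$. Since $q$ vanishes on $[Q,\infty)$, one can write $\theta(x,0) = Ae^{\sqrt{i\gamma}\,x} + Be^{-\sqrt{i\gamma}\,x}$ for $x \geq Q$, with constants $A, B$ obtained by matching at $x=Q$. Since $\sqrt{i\gamma} = \sqrt{\gamma/2}(1+i)$, the two exponentials have magnitudes $e^{\pm R\sqrt{\gamma/2}}$ at $x=R$. Moreover $A$ and $B$ cannot both vanish, for else $\theta(\cdot,0) \equiv 0$ on $[Q,\infty)$ and uniqueness of solutions of the ODE would force $\theta(\cdot,0) \equiv 0$ on $[0,\infty)$, contradicting $\theta'(0,0)=1$. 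Hence whichever of $A, B$ is non-zero controls $|f_R(0)|$ up to a factor $e^{\pm R\sqrt{\gamma/2}}$, so $\bigl|\log|f_R(0)|\bigr| = O(R)$. Combining with $\log M(\rho)$ and absorbing the $O(R)$ into an extra $\gamma R^2/\log R$ by enlarging $R_0$ yields the claim $N(H_R) \leq (11/\log 2)\gamma R^2/\log R$.

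The main obstacle is the lower bound on $|f_R(0)|$: one must rule out accidental cancellation, which the uniqueness argument above handles. The two remaining technical points---matching the zero-order of $f_R$ with the algebraic eigenvalue multiplicity, and handling the isolated case $f_R(0) = 0$ via a small centre shift---are standard but deserve bookkeeping.
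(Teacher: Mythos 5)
Your proof is correct and follows the same overall strategy as the paper's — Jensen's formula applied to the entire function $f_R$, with the zero-enclosure furnished by Theorem~\ref{thm:mag-bound}(b) and the growth bound furnished by Lemma~\ref{lem:gronwall-est} — but it diverges at one substantive technical point: the choice of centre. You centre Jensen at the origin and must therefore control $|f_R(0)| = |\theta'(R,0)|$ from below; you do this by exploiting the compact support of $q$ to solve the ODE explicitly on $[Q,\infty)$, and you correctly observe that $A$ and $B$ cannot both vanish and that the worst-case decay is $e^{-R\sqrt{\gamma/2}}$, giving $-\log|f_R(0)| = O(R)$, a lower-order term. The paper instead centres at a fixed $z_0 \in \C_+$ chosen once and for all so that $\Im\sqrt{z_0^2 - i\gamma} \geq 1$ and $\psi_+(0, z_0^2 - i\gamma) \neq 0$; with that choice the decomposition in Lemma~\ref{lem:cont-fR-decomp} makes the $e^{i\sqrt{z_0^2 - i\gamma}R}$ factor grow like $e^R$, so $|f_R(z_0)| \geq C(q,\gamma)$ with no work and no $O(R)$ error at all. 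The paper's choice is slightly slicker (and notably does \emph{not} lean on compact support for the lower bound — only the upper bound uses it — which is why essentially the same device reappears in Theorem~\ref{thm:exp-number}), while yours is more direct and explicit in this specific setting. Both give the same leading constant.

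Two small remarks on your write-up. First, your hedge about $f_R(0) = 0$ is unnecessary: your own analysis of $\theta(x,0) = Ae^{\sqrt{i\gamma}x} + Be^{-\sqrt{i\gamma}x}$ already shows that $\theta'(R,0) \neq 0$ for $R$ sufficiently large (the growing exponential dominates when $A\neq 0$, and the single surviving term is manifestly nonzero when $A=0$), so no shifting of the centre is ever needed for $R \geq R_0$. Second, when you invoke Theorem~\ref{thm:mag-bound}(b) to enclose all zeros of $f_R$ in $B_r(0)$, you should note — as the paper also glosses over — that (b) only covers eigenvalues in $\Gamma_\gamma$; the remaining eigenvalues lie in $B_X(0)$ by (a), and one takes $R_0$ large enough that $\sqrt{X+\gamma} \leq 5\gamma R/\log R$ so that they, too, fall inside your disc.
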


\begin{proof}

Let $z_0 \in \C_+$ be such that
\begin{equation}\label{eqpr:number-01}
    \psi_+(0,z_0^2 - i\gamma) \neq 0, \quad \Im \sqrt{z_0^2 - i \gamma} \geq 1 \quad \text{and} \quad  \sqrt{|z_0^2 - i \gamma|} \leq 2.
\end{equation}
In fact, by choosing $z_0$ to be the minimiser of some suitable total order on $\C$ in the set of points that maximise $z \mapsto \psi_+(0,z^2 - i\gamma)$ while satisfying the latter two inequalities of \eqref{eqpr:number-01}, $z_0$ can be determined uniquely by $q$ and $\gamma$, $z_0 = z_0(q,\gamma)$.
Define $r = r(R) > 0$ by
\begin{equation}\label{eqpr:number-02}
    \frac{r}{2} = \gamma^{1/2} + |z_0| + \frac{5 \gamma R}{\log R}.
\end{equation}
By the triangle inequality,
\begin{equation}
 |z - z_0| \leq |z| + |z_0| \leq \sqrt{|z^2 - i \gamma| + \gamma} + |z_0| \leq \sqrt{|z^2 - i \gamma|} + \gamma^{1/2} + |z_0|
\end{equation}
so,
\begin{equation}\label{eqpr:number-0}
S_R := \set*{z \in \C_+ : \sqrt{|z^2 - i \gamma|} \leq \frac {5 \gamma R}{\log R } } \subseteq \overline{B}_{r/2}(z_0). 
\end{equation}
Let $R > Q > 0$  be large enough so that estimate \eqref{eq:mag-bound-q-indpt} of Theorem \ref{thm:mag-bound} (b) holds.
Since the zeros of $f_R$ in $\C_+$ have a bijective correspondence with the eigenvalues of $H_R$, the set $S_R$ contains all the zeros of $f_R$ in $\C_+$ and hence the number of eigenvalues of $H_R$ is bounded by the number of zeros for $f_R$ in the ball $\overline{B}_{r/2}(z_0)$,
\begin{equation}\label{eqpr:number-1}
    N(H_R)  \leq |f_R^{-1}\set{0} \cap \overline{B}_{r/2}(z_0))|.
\end{equation}
Since $f_R$ is entire, Jensen's formula gives us 
\begin{equation}\label{eqpr:number-2}
    |f_R^{-1}\set{0} \cap \overline{B}_{r/2}(z_0)| \leq \frac{1}{\log 2} \log \abs*{ \frac{1}{f_R(z_0)} \sup_{|z - z_0| = r} \abs*{f_R(z)}}.
\end{equation}

Since $R > Q$, the terms $\mathcal E_1(R,z)$ and $\mathcal E_1(R,z)$, defined by $\eqref{eq:big-error-cont-E1}$ and $\eqref{eq:big-error-cont-E2}$ respectively, vanish.
Hence, by Lemma \ref{lem:cont-fR-decomp} and the fact that $\Im \sqrt{z_0^2 - i\gamma} \geq 1$, 
\begin{multline*}
    |f_R(z_0) u(z_0)| \geq |\psi_+(0,z_0^2 - i \gamma) \br{z_0 + \sqrt{z_0^2 - i \gamma} }| e^R  \\
     -  | \psi_-(0,z_0^2 - i \gamma) (z_0 - \sqrt{z_0^2 - i \gamma})) |e^{-R}.
\end{multline*}
Note that $\Im \sqrt{z_0^2 - i\gamma} \geq 1$ implies that $z_0 \neq \pm \sqrt{i \gamma}$ so Lemma \ref{lem:cont-fR-decomp} is indeed applicable here. 
Then, since $\psi_+(0,z_0^2 - i\gamma) \neq 0$,  $\sqrt{|z_0^2 - i \gamma|} \leq 2$ and $z_0 = z_0(q,\gamma)$,
\begin{equation}\label{eqpr:number-3}
|f_R(z_0)| \geq C(q,\gamma)
\end{equation}
for large enough $R$.

By expression \eqref{eq:fR-entire} for $f_R$ and the estimates in Lemma \ref{lem:gronwall-est} for $\theta$ and $\theta'$, for all $z \in \partial B_r(z_0)$,
\begin{equation}\label{eqpr:number-4}
    |f_R(z)| \leq C(q)(1 + R)(1 + |z|)e^{|\sqrt{z^2 - i \gamma}|R}.
\end{equation}
Furthermore, by the triangle inequality and expression \eqref{eqpr:number-02} for $r$, for all $z \in \partial B_r(z_0)$,
\begin{equation}\label{eqpr:number-5}
    \sqrt{|z^2 - i \gamma|} \leq |z - z_0| + |z_0| + \gamma^{1/2} = 3 \gamma^{1/2} + 3 |z_0| + \frac{10 \gamma R}{\log R}.
\end{equation}
Noting that for $z \in \partial B_r(z_0)$, the factor $(1 + |z|)$ in \eqref{eqpr:number-4} is $o(R)$, combining \eqref{eqpr:number-1} - \eqref{eqpr:number-5} gives us 
\begin{equation*}
    N(H_R) \leq \frac{1}{\log 2} \br*{ \log o(R^2) + \br*{3 \gamma^{1/2} + 3 |z_0|}R + \frac{10 \gamma R^2}{\log R}}
\end{equation*}
as $R \to \infty$.
Estimate \eqref{eq:number-compact-ineq} follows.
\end{proof}

\subsection{Exponentially Decaying Potentials}

\begin{assumption}[Naimark Condition]\label{ass:naimark}
There exists $a > 0$ such that 
\begin{equation*}
    \int_0^\infty e^{4 at} |q(t)| \d t < \infty.
\end{equation*} 
\end{assumption}

If Assumption \ref{ass:naimark} is satisfied then for each $x > 0$ the functions $\varphi(x,\cdot)$ and $\varphi'(x,\cdot)$ admit analytic continuations from $\C_+$ into $\set{\Im z > - 2 a}$. 
For each $x > 0$, the functions $E$ and $E^d$ appearing in the decomposition \eqref{eq:phi-decomp} of the Jost solution $\varphi$ satisfy
\begin{equation}\label{eq:E-continued-Linf-bound}
    |E(x,z)|+|E^d(x,z)| \leq C(q)\quad \text{if} \quad \Im z \geq - a
\end{equation}
and 
\begin{equation}\label{eq:E-continued-1z-ineq}
    |E(x,z)|+|E^d(x,z)| \leq \frac{C(q)}{|z|} \quad \text{if} \quad \Im z \geq - a \text{ and } |z| \geq 1.
\end{equation}
See \cite[Theorem 2.6.1]{Naimark} and \cite[Lemma 1]{stepin2015complex} for proofs of the above claims.

The next proposition allows us to utilise the uniform enclosure of Theorem \ref{thm:mag-bound} (a) in the estimation of the number of eigenvalues of $H_R$.
\begin{proposition}\label{prop:Jensen-Upper}
Suppose that $f$ is an analytic function defined on an open neighbourhood of the closed semi-disc $D_r := \overline{B}_r(0)\cap \overline{\C}_+$ for some $r > 0$.
Let $\alpha$ and $\beta$ be any numbers in the interval $(0,1)$ satisfying
\begin{equation}\label{eq:alpha-1-2-cond}
    \beta \br*{\frac{1 - \alpha}{\alpha + \beta}}^2 > \frac{Y}{\eta}
\end{equation}
and let $N(\alpha r)$ denote the number of zeros in the region 
\begin{equation}\label{eq:strip-semi-disc}
    D_{\alpha r,\eta,Y} := \set{z \in \C : \eta \leq \Im z \leq Y, \, |z| \leq \alpha r}
\end{equation}
where $Y,\eta > 0$ are given parameters satisfying $\eta < Y < r$. 
Then,
\begin{equation}\label{eq:Jensen-Upper-ineq}
N(\alpha r) \leq \frac{2}{\log \Lambda(r)} \log \br*{ \frac{1}{\min \set{\beta, 1 - \beta}} \frac{\sup_{z \in \partial D_r} |f(z)|}{|f(i \beta r)|}}
\end{equation}
where
\begin{equation}\label{eq:Lambda-defn}
    \Lambda(r) := \frac{1 + \frac{4 \beta \eta}{(\alpha + \beta)^2}\frac{1}{r} }{1 + \frac{4 Y}{(1 - \alpha)^2}\frac{1}{r}}.
\end{equation}

\end{proposition}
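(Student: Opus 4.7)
The plan is to transfer the zero-counting problem to the unit disc via an explicit conformal equivalence $\phi\colon D_r^\circ\to B_1(0)$ (extending continuously to the closures) that sends $i\beta r$ to $0$, and then to apply the classical one-variable Jensen inequality. I would first construct $\phi$ as a composition: the Joukowski-type map $\Phi(z)=-\tfrac12(z/r+r/z)$ is a conformal equivalence from the interior of $D_r$ onto $\C_+$, with $\Phi(i\beta r)=ic$ where $c=(1-\beta^2)/(2\beta)$; composing with the Cayley transform $w\mapsto(w-ic)/(w+ic)$ from $\C_+$ onto $B_1(0)$ and simplifying the resulting rational function yields
\[
\phi(z) \;=\; \frac{(z-i\beta r)(z+ir/\beta)}{(z+i\beta r)(z-ir/\beta)} \;=\; \phi_a(z)\,\phi_b(z),
\]
which is readily seen to satisfy $\phi(i\beta r)=0$ and $|\phi|=1$ on $\partial D_r$.

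The main technical step is estimating $|\phi(z)|$ on the strip $D_{\alpha r,\eta,Y}$. Using the algebraic identities
\[
|\phi_a(z)|^{-2} = 1+\frac{4\beta r\,\Im z}{|z-i\beta r|^2}, \qquad |\phi_b(z)|^2 = 1+\frac{4(r/\beta)\,\Im z}{|z-ir/\beta|^2},
\]
together with the triangle inequality $|z-i\beta r|\le|z|+\beta r\le(\alpha+\beta)r$ and $\Im z\ge\eta$, I would obtain $|\phi_a(z)|^{-2}\ge 1+4\beta\eta/[(\alpha+\beta)^2 r]$, which is the numerator of $\Lambda(r)$. Symmetrically, the reverse triangle inequality $|z-ir/\beta|\ge r/\beta-|z|\ge(1-\alpha\beta)r/\beta$ combined with $\Im z\le Y$ and the elementary estimate $\beta(1-\alpha)^2\le(1-\alpha\beta)^2$ valid for $\alpha,\beta\in(0,1)$ (which follows from the factorisation $(1-\alpha\beta)^2-\beta(1-\alpha)^2=(1-\beta)(1-\alpha^2\beta)$) yields $|\phi_b(z)|^2\le 1+4Y/[(1-\alpha)^2 r]$, matching the denominator of $\Lambda(r)$. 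Multiplying the two bounds gives $|\phi(z)|^2\le 1/\Lambda(r)$ throughout $D_{\alpha r,\eta,Y}$, and hypothesis \eqref{eq:alpha-1-2-cond} is precisely what guarantees $\Lambda(r)>1$, so that this upper bound is strictly less than one.

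The argument concludes by applying Jensen to $g:=f\circ\phi^{-1}$, which is analytic on a neighbourhood of $\overline{B}_1(0)$, with $g(0)=f(i\beta r)$ and $\sup_{|w|=1}|g(w)|=\sup_{z\in\partial D_r}|f(z)|$ since $\phi$ identifies the two boundaries. For any $\rho\in(0,1)$ the classical Jensen inequality gives $N_g(\rho)\log(1/\rho)\le\log\big(\sup_{|w|=1}|g(w)|\big/|g(0)|\big)$, where $N_g(\rho)$ counts zeros of $g$ in $\overline{B}_\rho(0)$. Setting $\rho=1/\sqrt{\Lambda(r)}$ and using the previous step to conclude $\phi(D_{\alpha r,\eta,Y})\subset\overline{B}_\rho(0)$, and hence $N(\alpha r)\le N_g(\rho)$, produces the leading factor $2/\log\Lambda(r)$ in the stated bound; the additional factor $1/\min\{\beta,1-\beta\}$ inside the logarithm is book-keeping slack reflecting the Euclidean inradius $\min\{\beta,1-\beta\}\,r$ of $D_r$ at $i\beta r$, which enters when absorbing the Schwarz-type distortion of $\phi^{-1}$ near the origin. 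The main obstacle is the second step: the asymmetry of $\Lambda(r)$ forces the lossy inequality $\beta(1-\alpha)^2\le(1-\alpha\beta)^2$ in order to extract exactly the factor $(1-\alpha)^2$ that appears in the denominator, and tracking both the direct and the reverse triangle bounds carefully to land on precisely the stated form is where the computation is easiest to get wrong.
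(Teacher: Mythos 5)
Your argument is correct and takes a genuinely different route from the paper. The paper works directly on the semi-disc: it forms the half-plane Blaschke product $b(z)=\prod_j(z-\overline z_j)/(z-z_j)$ over the zeros, applies Cauchy's formula to $fb$ on $\partial D_r$, and then estimates the ratio $|b_j(z_0)|/\sup_{\partial D_r}|b_j|$ term by term; the factor $1/\min\{\beta,1-\beta\}$ arises there from the crude bound $|z-z_0|\geq\min\{\beta,1-\beta\}\,r$ in the Cauchy kernel. You instead uniformise the semi-disc to the unit disc via the explicit map $\phi(z)=\frac{(z-i\beta r)(z+ir/\beta)}{(z+i\beta r)(z-ir/\beta)}$ and apply the one-variable Jensen inequality to $g=f\circ\phi^{-1}$. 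Your computations check out: the factorisation $z^2+2icrz+r^2=(z-i\beta r)(z+ir/\beta)$ is correct, the identities for $|\phi_a|^{-2}$ and $|\phi_b|^2$ are the standard half-plane Blaschke identity $|z-\bar w|^2/|z-w|^2=1+4\,\Im z\,\Im w/|z-w|^2$, the triangle bounds land exactly on the numerator and denominator of $\Lambda(r)$, and the inequality $\beta(1-\alpha)^2\leq(1-\alpha\beta)^2$ follows from your factorisation $(1-\alpha\beta)^2-\beta(1-\alpha)^2=(1-\beta)(1-\alpha^2\beta)$. The hypothesis \eqref{eq:alpha-1-2-cond} is precisely $\Lambda(r)>1$, so the radius $\rho=\Lambda(r)^{-1/2}$ is admissible.

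Two points worth flagging, neither a gap. First, $\phi$ has corner singularities at $0$ and $\pm r$, so $g=f\circ\phi^{-1}$ is analytic on $B_1(0)$ and continuous on the closed disc but need not extend analytically past the boundary; the Jensen inequality you invoke still holds under this weaker hypothesis (apply Jensen's formula on radius $\rho'<1$, use the maximum principle $\sup_{|w|=\rho'}|g|\leq\sup_{|w|=1}|g|$, and let $\rho'\to1$), but it is worth stating. Second, your explanation of the extra factor $1/\min\{\beta,1-\beta\}$ as arising from ``Schwarz-type distortion of $\phi^{-1}$'' is not quite right: your argument in fact delivers the cleaner bound
\begin{equation*}
N(\alpha r)\leq\frac{2}{\log\Lambda(r)}\log\frac{\sup_{z\in\partial D_r}|f(z)|}{|f(i\beta r)|},
\end{equation*}
which is strictly stronger than \eqref{eq:Jensen-Upper-ineq}, and the paper's extra factor is simply an artefact of its Cauchy-integral estimate that your conformal-map argument avoids entirely. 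In short, your route is slightly longer to set up (one has to find and simplify $\phi$) but buys a marginally better constant and reduces the proof to textbook Jensen; the paper's route avoids any conformal mapping by exploiting that $|b_j|=1$ on the real segment of $\partial D_r$, at the cost of the $1/\min\{\beta,1-\beta\}$ factor and a two-case boundary estimate. Your decomposition $\phi=\phi_a\phi_b$ also makes visible why the paper's $\Lambda(r)$ is asymmetric: $\phi_a$ is exactly the paper's half-plane Blaschke factor, while $\phi_b$ is the correction needed to make $|\phi|=1$ on the whole boundary of the semi-disc rather than only on $\R$.
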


\begin{remark}\label{rem:Jensen}
One can always guarantee that condition \eqref{eq:alpha-1-2-cond} for $\alpha$ and $ \beta$ is satisfied by choosing, for instance, 
\begin{equation}\label{eq:alpha-choice}
    \alpha = \beta = \frac{1}{4}\frac{\eta}{2Y + \eta}.
\end{equation}
\end{remark}

\begin{figure}[h]
  \includegraphics[width=0.9\textwidth]{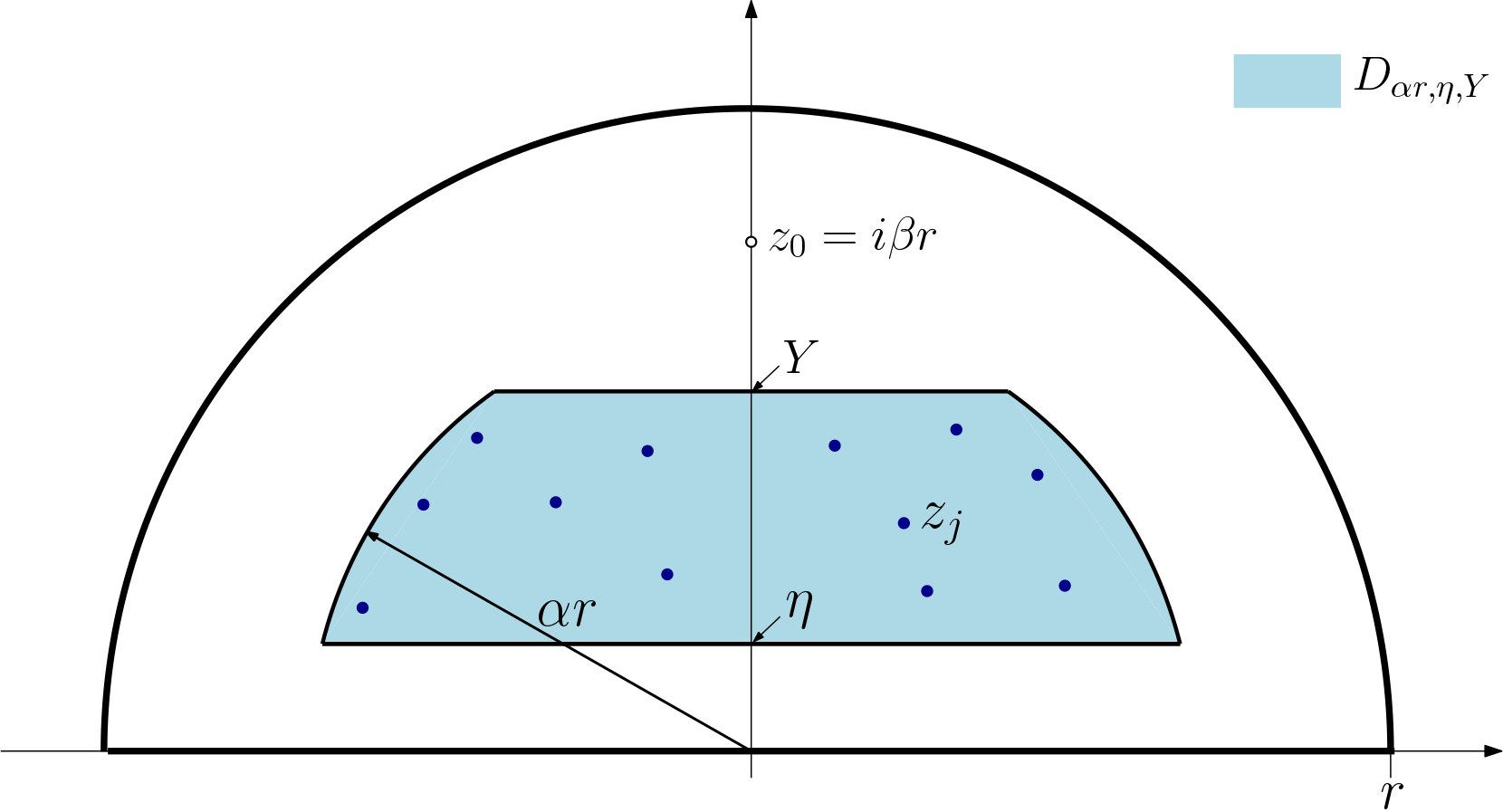}  
  \caption{Illustration for the setup of Proposition \ref{prop:Jensen-Upper}.}
  \label{fig:zeros}
\end{figure}

\begin{proof}[Proof of Proposition \ref{prop:Jensen-Upper}]
Let $\set{z_j}_{j=1}^{N(\alpha r)}$ denote the set of zeros of $f$ in the set $D_{\alpha r,\eta,Y}$ and consider the Blaschke product
\begin{equation*}
b(z) := \prod_j \frac{z - \overline{z}_j}{z - z_j} \equiv \prod_j b_j(z).    
\end{equation*}
Note that higher multiplicity zeros of $f$ are repeated in the set $\set{z_j}$ accordingly.
Let $z_0 := i \beta r$. 
The function $f(z)b(z)$ is analytic on an open neighbourhood of $D_r$ so by Cauchy's formula, 
\begin{equation}\label{eqpr:Jensen-0}
    \frac{1}{2 \pi i} \oint_{\partial D_r} \frac{f(z)b(z)}{z - z_0} \d z= f(z_0) b(z_0).
\end{equation}
Observing that $|z - z_0| \geq \min \set{\beta, 1 - \beta} r$ for all $z \in \partial D_r$, it holds that
\begin{equation*}
 \frac{1}{2 \pi} \oint_{\partial D_r} \frac{|\d z|}{|z - z_0|}  \leq \frac{1}{\min \set{\beta, 1 - \beta}}
\end{equation*}
which can be used to estimate the integral in \eqref{eqpr:Jensen-0} to get 
\begin{equation}\label{eqpr:Jensen-1}
    \prod_j \frac{|b_j(z_0)|}{\sup_{z \in \partial D_r } |b_j(z)|} \leq \frac{\sup_{z \in \partial D_r} |f(z)|}{|f(z_0)|} \frac{1}{\min \set{\beta, 1 - \beta}}.
\end{equation}
By a direct computation, we have
\begin{equation*}
    |b_j(z)| = \sqrt{1 + \frac{4 \Im z \Im z_j}{|z - z_j|^2}}.
\end{equation*}
Since
\begin{equation*}
    \Im z_0 = \beta r,\quad \Im z_j \geq \eta, \quad  |z_0 - z_j| \leq (\alpha + \beta)r,
\end{equation*}
giving us a lower bound for $|b_j(z_0)|$, and since for any $z \in \C$ with $|z| = r$
\begin{equation*}
    \Im z \leq r, \quad \Im z_j \leq Y, \quad |z - z_j| \geq (1 - \alpha)r,
\end{equation*}
giving us an upper bound for $|b_j(z)|$, we have 
\begin{equation}\label{eqpr:Jensen-2}
    \frac{|b_j(z_0)|}{|b_j(z)|} \geq \Lambda(r)^{1/2}
\end{equation}
for any $ z \in \partial D_r$ with $|z| = r$.
Furthermore, if $z \in \R$ then $|b_j(z)| = 1$ so \eqref{eqpr:Jensen-2} in fact holds for every $ z \in \partial D_r$.
Combining \eqref{eqpr:Jensen-2} with \eqref{eqpr:Jensen-1} gives us 
\begin{equation}\label{eqpr:Jensen-3}
    \Lambda(r)^{N(\alpha r)/2} \leq \frac{1}{\min \set{\beta, 1 - \beta}} \frac{\sup_{z \in \partial D_r} |f(z)|}{|f(z_0)|}.
\end{equation}
If hypothesis \eqref{eq:alpha-1-2-cond} for $\alpha$ and $\beta$ holds then $\Lambda(r) > 1$ so we can take the logarithm of both sides of \eqref{eqpr:Jensen-3} and rearrange to obtain inequality \eqref{eq:Jensen-Upper-ineq}.
\end{proof}

\begin{theorem}\label{thm:exp-number}

Suppose that Assumption \ref{ass:naimark} holds. Then there exists $R_0 = R_0(q,\gamma) > 0$ such that for every $R \geq R_0$,
\begin{equation}\label{eq:exp-number-ineq}
    N(H_R) \leq C \frac{\sqrt{X} + a}{a^2}\frac{\gamma^2 R^3}{(\log R)^2}
\end{equation}
where $C = 88788$ and $X = X(q,\gamma) > 0$ is the constant appearing in Theorem \ref{thm:mag-bound} (a).
\end{theorem}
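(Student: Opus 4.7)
The plan is to apply the semi-disc zero-counting bound of Proposition \ref{prop:Jensen-Upper} to a translate of the analytic continuation of $f_R$, exploiting the $R$-uniform enclosure of Theorem \ref{thm:mag-bound} (a). Under Assumption \ref{ass:naimark}, the bounds \eqref{eq:E-continued-Linf-bound} ensure that $\varphi(R,\cdot)$ and $\varphi'(R,\cdot)$ extend analytically to $\{\Im z > -2a\}$, and so therefore does $f_R$. I will work with $g(z) := f_R(z-ia)$, which is analytic on a neighbourhood of $\overline{\C}_+$ and to which Proposition \ref{prop:Jensen-Upper} is directly applicable.

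For the parameter choice, Theorem \ref{thm:mag-bound} (a) forces $\Im\sqrt{\lambda} \leq \sqrt{X}$ for every eigenvalue $\lambda$, so every zero of $g$ corresponding to an eigenvalue lies in the strip $\eta \leq \Im z \leq Y$ with $\eta = a$ and $Y = \sqrt{X}+a$; Theorem \ref{thm:mag-bound} (b) then controls $|\sqrt{\lambda}| \leq 5\gamma R/\log R + O(1)$. I will take $\alpha=\beta=\eta/(4(2Y+\eta))$ as in Remark \ref{rem:Jensen} (this satisfies \eqref{eq:alpha-1-2-cond}) and pick $r$ just larger than $(5\gamma R/\log R+\sqrt{X}+a)/\alpha$, placing every eigenvalue-zero inside $\{|z|\le\alpha r\}$. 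This forces $r$ to be of order $(\sqrt{X}+a)\gamma R/(a\log R)$, and this factor of $1/\alpha$ is ultimately what will produce the $(\sqrt{X}+a)/a^2$ scaling in the final bound.

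Two bounds are required for \eqref{eq:Jensen-Upper-ineq}: an upper bound for $\sup_{\partial D_r} |g|$ and a lower bound for $|g(i\beta r)|$. For the latter, the test point $i\beta r$ in $g$-coordinates corresponds to the purely imaginary $f_R$-argument $iy_0$ with $y_0 := \beta r - a$, which is large for $R$ large. In this regime $\sqrt{-y_0^2 - i\gamma} \approx iy_0$, so in the representation of Lemma \ref{lem:cont-fR-decomp} the term carrying $e^{-i\sqrt{z_0^2-i\gamma}R}\sim e^{y_0 R}$ dominates, yielding $|f_R(iy_0)| \geq c\, e^{y_0 R}$ uniformly in $R$ (after verifying $\psi_+(0, z_0^2 - i\gamma)\to 1$ and the Wronskian $u(iy_0)\sim 2iy_0$ from the asymptotics of $\psi_\pm$ at $0$). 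For the upper bound on the boundary I combine Lemma \ref{lem:gronwall-est} for $\theta$ with \eqref{eq:E-continued-Linf-bound} for the Jost error terms, giving $|f_R(z)|\le C(q)R(1+|z|)e^{|\Im\sqrt{z^2 - i\gamma}|R}$ and hence $\sup_{\partial D_r}|g|\le C(q)Rr\,e^{(r+O(1))R}$.

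Plugging these into \eqref{eq:Jensen-Upper-ineq}, the ratio satisfies $\log(\sup|g|/|g(i\beta r)|) \lesssim (1-\beta)rR + O(\log R)$, while a Taylor expansion of \eqref{eq:Lambda-defn} produces $\log\Lambda(r)\sim 4(Y+\eta)/r$. Proposition \ref{prop:Jensen-Upper} therefore delivers $N(H_R) \lesssim r^2 R/(Y+\eta)$. Substituting the formula for $r$ together with $Y+\eta=\sqrt{X}+2a$ and $(2Y+\eta)^2/(Y+\eta)\le 9(\sqrt{X}+a)$ yields the claimed bound $C(\sqrt{X}+a)\gamma^2 R^3/(a^2(\log R)^2)$; the explicit constant $88788$ emerges from careful bookkeeping of the numerical factors. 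The main technical obstacle will be the uniform-in-$R$ lower bound for $|f_R(iy_0)|$: $y_0=\beta r-a$ must be large enough for $R$ large that both $\psi_+(0,z_0^2-i\gamma)$ and $u(z_0)$ are non-degenerate to leading order, and this is precisely where the $R$-uniform enclosure (A) is indispensable, since it keeps $Y$ (and hence $\beta$) bounded independently of $R$.
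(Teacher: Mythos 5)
Your proposal is correct and is essentially the same argument as the paper's: translate by $ia$ so that $\tilde f_R(z)=f_R(z-ia)$ is analytic on a neighbourhood of $\overline{\C}_+$, apply Proposition~\ref{prop:Jensen-Upper} with $\eta=a$, $Y=\sqrt X+a$ and $\alpha=\beta$ as in Remark~\ref{rem:Jensen}, set $\alpha r\approx 5\gamma R/\log R$ using Theorem~\ref{thm:mag-bound}~(b), bound the boundary supremum via Lemma~\ref{lem:gronwall-est} together with \eqref{eq:E-continued-Linf-bound}, and bound the test-point value from below via the representation of Lemma~\ref{lem:cont-fR-decomp}. One small observation in your favour: you correctly note that the Wronskian $u(z)=2i\sqrt{z^2-i\gamma}$ grows like $2y_0$ at $z_R=iy_0$, whereas the paper's line \eqref{eqpr:exp-number-4a} asserts $|u(z_R)|\leq C(q,\gamma)$, which is not accurate; the paper's conclusion $|\tilde f_R(i\beta r)|\geq 1$ survives nonetheless because the growth of $|u(z_R)|$ cancels against the growth of $|z_R+\sqrt{z_R^2-i\gamma}|$ in the numerator, exactly as your sharper estimate $|f_R(iy_0)|\gtrsim e^{y_0 R}$ reflects.
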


\begin{proof}

Let $\tilde{f}_R(z) := f_R(z - i a)$ and let $\alpha, \beta > 0$ satisfy equation \eqref{eq:alpha-choice} of Remark \ref{rem:Jensen} with $\eta = a$ and $Y = \sqrt{X} + a$ where $X = X(q,\gamma)$ is the constant appearing in Theorem \ref{thm:mag-bound}.
Then hypothesis \eqref{eq:alpha-1-2-cond} of Proposition \ref{prop:Jensen-Upper} is satisfied.
Note that with this choice of $\beta$ we have $\beta < 1/2$, so,
\begin{equation}
    \min \set{\beta, 1 - \beta} = \beta.
\end{equation}

The zeros of $\tilde{f}_R$ in $\set{\Im z > a}$ have a bijective correspondence to eigenvalues of $H_R$ given by 
\begin{equation}
    (z - i a)^2 \in \sigdis(H_R) \qquad \iff \qquad \Im z > a \text{ and }\tilde{f}_R(z) = 0 .
\end{equation}
Assuming without loss of generality that $X \geq \gamma$, the square root of the enclosure $B_X(0) \cup \Gamma_\gamma$ is contained in the strip $\set{0 \leq \Im w \leq \sqrt{X}} \subset \C$. 
Then by the uniform enclosure of Theorem \ref{thm:mag-bound} (a), the zeros of $\tilde{f}_R$ in $\set{\Im z > a}$ are contained in the strip $\set{a \leq \Im z \leq \sqrt{X} + a}$.
By the triangle inequality and the magnitude bound of Theorem \ref{thm:mag-bound} (b), any zero $z$ of $\tilde{f}_R$ with $\Im z > a$ satisfies
\begin{equation}
    |z| \leq  \gamma^{1/2} + a + \sqrt{| (z - ia)^2 - i \gamma|} \leq \alpha r 
\end{equation}
where $r = r(R)$ is defined by
\begin{equation}\label{eqpr:exp-number-0}
    \alpha r = \gamma^{1/2} + a + \frac{5 \gamma R}{\log R }.
\end{equation}
Hence the zeros of $\tilde{f}_R$ in $\set{\Im z > a}$ are contained in $D_{\alpha r, \eta, Y}$.

Applying Proposition \ref{prop:Jensen-Upper}, we get an estimate for the number of eigenvalues of $H_R$,
\begin{equation}\label{eqpr:exp-number-1}
    N(H_R) = |\tilde{f}_R^{-1}\set{0}\cap D_{\alpha r,\eta,Y}| \leq \frac{2}{\log \Lambda(r)} \log \br*{\frac{1}{\beta} \frac{\sup_{z \in \partial D_r} |\tilde{f}_R(z)|}{|\tilde{f}_R(i\beta r)|}}
\end{equation}
where 
\begin{equation}\label{eqpr:exp-number-2}
    \Lambda(r) = \frac{1 + C_1/r}{1+ C_2/r}
\end{equation}
for some constants $C_1 > C_2 > 0$ depending only on $X$ and $a$.
The remainder of the proof consists in estimating the right hand side of \eqref{eqpr:exp-number-1}.

Let $z_R :=  i \beta r(R) - i a$.
By Lemma \ref{lem:cont-fR-decomp},
\begin{multline}\label{eqpr:exp-number-3}
    |f_R(z_R) u(z_R)| \geq  |\psi_+(0,z_R^2 - i \gamma)(z_R + \sqrt{z_R^2 - i \gamma} + \mathcal E_2(R,z_R))|  \\
     - |\psi_-(0,z_R^2 - i \gamma)(z_R - \sqrt{z_R^2 - i \gamma} + \mathcal E_1(R,z_R))|  
\end{multline}
for large enough $R$.
By estimates \eqref{eq:Epm-error} for $E_\pm$ and $E^d_\pm$, and the corresponding estimates \eqref{eq:E-continued-1z-ineq} for $E$ and $E^d$,
\begin{equation}\label{eqpr:exp-number-4a}
    |u(z_R)|+|\psi_-(0,z_R^2 - i \gamma)|+|\mathcal E_1(R,z_R)|+|\mathcal E_2(R,z_R)| \leq C(q,\gamma) 
\end{equation}
and 
\begin{equation}\label{eqpr:exp-number-4b}
    |\psi_+(0,z_R^2 - i \gamma)| \geq C(q,\gamma) 
\end{equation}
for large enough $R$. 
By Lemma \ref{lem:h-pm},
\begin{equation}\label{eqpr:exp-number-5}
    \lim_{R \to \infty} |z_R + \sqrt{z_R^2 - i \gamma}| = \infty \quad \text{and}\quad \lim_{R \to \infty} |z_R - \sqrt{z_R^2 - i \gamma}| = 0.
\end{equation}
Combining \eqref{eqpr:exp-number-3}  with \eqref{eqpr:exp-number-4a}, \eqref{eqpr:exp-number-4b} and \eqref{eqpr:exp-number-5} gives us 
\begin{equation}\label{eqpr:exp-number-6}
    |\tilde{f}_R(i \beta r)| = |f_R(z_R)| \geq 1 
\end{equation}
for large enough $R$. 

The factor involving $\Lambda(r)$ on the right hand side of \eqref{eqpr:exp-number-1} can be estimated using the expression \eqref{eqpr:exp-number-2} for $\Lambda$ and the inequality $\log x \geq (x - 1)/(x + 1)$ $(x \geq 1)$,
\begin{equation}\label{eqpr:exp-number-7}
    \log \Lambda(r) \geq \frac{\Lambda(r) - 1}{\Lambda(r) + 1} = \frac{(C_1 - C_2)/r(R)}{2 + (C_1 + C_2)/r(R)} \geq \frac{C_1 - C_2}{3 r(R)}
\end{equation}
for large enough $R$.

The function $\tilde{f}_R$ is estimated from above using the bound in Lemma \ref{lem:gronwall-est} for $\theta$ and $\theta'$ and the uniform bounds $\eqref{eq:E-continued-Linf-bound}$ for $E(R,\cdot)$ and $E^d(R,\cdot)$,
\begin{equation}\label{eqpr:exp-number-8}
    |\tilde{f}_R(z)| \leq C(q)\br*{1 + R} \br*{ 1 + |z| }e^{a R} e^{|\sqrt{(z -  i a)^2 - i \gamma}|R} \qquad (z \in \overline{\C}_+). 
\end{equation}
Using the expression \eqref{eqpr:exp-number-0} for $r$,  for any $z \in \partial D_r$ we have 
\begin{equation}\label{eqpr:exp-number-9}
    \sqrt{|(z - i a)^2 - i \gamma|} \leq \gamma^{1/2} + a + |z| \leq O(1) + \frac{5 \gamma R}{\alpha \log R }
\end{equation}
as $R \to \infty$.
Combining \eqref{eqpr:exp-number-1} with  \eqref{eqpr:exp-number-6}, \eqref{eqpr:exp-number-7}, \eqref{eqpr:exp-number-8} and \eqref{eqpr:exp-number-9}, noting that $|z| = o(R)$ for $z \in \partial D_R$ and $\beta^{-1} =O(1)$, gives
\begin{equation*}
    N(H_R) \leq \frac{6}{C_1 - C_2} \br*{ O(1) + \frac{5 \gamma R}{\alpha \log R }}\br*{ O(R) +  \frac{5 \gamma R^2}{ \alpha \log R} }
\end{equation*}
as $R \to \infty$ and so 
\begin{equation}\label{eqpr:exp-number-10}
    N(H_R) \leq \frac{151 \gamma^2 R^3}{(C_1 - C_2)\alpha^2 (\log R)^2}
\end{equation}
for large enough $R$.

Finally, we put the constant into a more illuminating form. 
By the definition \eqref{eq:Lambda-defn} of $\Lambda$ in Proposition \ref{prop:Jensen-Upper},
\begin{equation}\label{eqpr:exp-number-11}
    C_1 = \frac{\eta}{\alpha} \quad \text{and} \quad C_2 = \frac{4 Y}{(1 - \alpha)^2}.
\end{equation}
Since $\frac{\eta}{12 Y} \leq \alpha \leq \frac{\eta}{8 Y}$, we have 
\begin{equation}\label{eqpr:exp-number-12}
    (C_1 - C_2)\alpha^2 = \eta \alpha - \frac{4Y \alpha^2}{(1 - \alpha)^2} \geq \frac{\eta^2}{12Y} - \frac{4 Y \alpha^2}{(1 - \frac{\eta}{8Y})^2} 
\end{equation}
and since $0 \leq \eta/Y \leq 1$, we have
\begin{equation}\label{eqpr:exp-number-13}
    \frac{\alpha^2}{(1 - \frac{\eta}{8Y})^2} = \frac{\eta^2}{64Y^2}\frac{1}{(1 + \frac{\eta}{2Y})^2 (1 - \frac{\eta}{8Y})^2} \leq \frac{\eta^2}{49 Y^2}
\end{equation}
Combining \eqref{eqpr:exp-number-12} and \eqref{eqpr:exp-number-13}, we have
\begin{equation}
    (C_1 - C_2)\alpha^2 \geq \frac{1}{588}\frac{\eta^2}{Y}.
\end{equation}
which gives estimate \eqref{eq:exp-number-ineq} when substituted into \eqref{eqpr:exp-number-10}, with $Y = \sqrt{X} + a$ and $\eta = a$.

\end{proof}

\bibliographystyle{abbrv}
\bibliography{DBMagnRefs.bib}

\end{document}